\newtheorem{theorem}{\bf Theorem}[section]
\title{Riemannian Geometry and Molecular Similarity II: K\"ahler Quantization}
\author{Rachael Pirie \\
    School of Natural and Environmental Sciences\\
    Newcastle University\\
	\texttt{r.pirie2@ncl.ac.uk}\\
	\And
	{Stuart J. Hall} \\
	School of Mathematics, Statistics, and Physics\\
	Newcastle  University\\
	\texttt{stuart.hall@ncl.ac.uk} \\
	\AND
	{Daniel J. Cole}\\
School of Natural and Environmental Sciences\\
Newcastle University\\
	\texttt{daniel.cole@ncl.ac.uk}
	\And
	 {Thomas Murphy} \\
	Department of Mathematics\\
	California State University, Fullerton \\
	\texttt{tmurphy@fullerton.edu} \\
}
\begin{document}
\maketitle
\noindent
\begin{abstract}
Shape-similarity between molecules is a tool used by chemists for virtual screening, with the goal of reducing the cost and duration of drug discovery campaigns. This paper reports an entirely novel shape descriptor as an alternative to the previously described RGMolSA descriptors \cite{cole2022riemannian}, derived from the theory of Riemannian geometry and K\"ahler quantization (KQMolSA). The treatment of a molecule as a series of intersecting spheres allows us to obtain the explicit \textit{Riemannian metric} which captures the geometry of the surface, which can in turn be used to calculate a Hermitian matrix $\mathbb{M}$ as a directly comparable surface representation. The potential utility of this method is demonstrated using a series of PDE5 inhibitors considered to have similar shape. The method shows promise in its capability to handle different conformers, and compares well to existing shape similarity methods. The code and data used to produce the results are available at: \url{https://github.com/RPirie96/KQMolSA}.

\end{abstract}

\keywords{Riemannian Geometry \and K\"ahler Quantization \and Molecular Shape \and Ligand-Based Virtual Screening}

\section{Introduction and Summary of Part I}

The concept that shared biological activity exists between similar molecules is used widely in drug discovery \cite{Johnson_Maggiora_1990}. Molecules with known activity can be used as templates to screen large databases for other potential hits. This is more efficient and allows coverage of a greater area of chemical space than is possible with experimental screening alone \cite{Leelananda_Lindert_2016}. Estimating similarity between molecules based on their 3D shape has gained popularity due to the requirement for protein-drug shape complementarity to enable strong binding. However no fixed notion of shape exists. Instead, comparison relies on mathematical approximation of the molecule's shape based on its volume, distribution of atomic distances or surface (most commonly treated as the van der Waals or solvent accessible surface) \cite{Kumar_Zhang_2018}. \\
\\
In the accompanying paper \cite{cole2022riemannian}, the RGMolSA method was presented. The descriptor developed there approximates the shape of the molecular surface using a simple nine-element vector containing the surface area and an approximation to the first eight non-zero eigenvalues of the ordinary Laplacian. The descriptor can be viewed as an approximation to the \textit{Riemannian metric}, the underlying mathematical object that describes the shape of a surface. In this paper we present an entirely different method of approximating the Riemannian metric by using ideas from the theory of {\it K\"ahler quantization}; we call this method K\"ahler quantization for Molecular Surface Approximation (KQMolSA). The theory was originally developed by mathematicians and string theorists in order to give explicit representations of the shapes of 4-dimensional objects (Calabi--Yau manifolds) that appear in physical theories (see \cite{DonNRCG} for the paper that pioneered its use as a numerical technique).  In a nutshell, a function called the K\"ahler potential is associated to the metric. We then compute something analogous to a Taylor expansion of this function with the coefficients being stored in a Hermitian matrix.  While the matrices themselves do depend upon the precise position and parameterisation of the molecular surface in three-dimensional space $\mathbb{R}^{3}$, the dependence is easy to calculate.  Hence we can perform our calculations in the `quantized' space of Hermitian matrices and assign a distance between the shapes of two molecular surfaces this way. The final distance is independent of the position of the molecules and the choices made in their parameterisations.

\subsection{Summary of Previous Work}
As in the accompanying paper \cite{cole2022riemannian}, our approach begins by treating the molecule as a series of intersecting spheres, with their radii given by the van der Waals radii of the constituent atoms. The surface is assumed to have a genus of zero, so any rings (e.g. benzene) are replaced with a single sphere of radius 2.25 \AA~to facilitate this. The molecular structure is then defined by the number of spheres $N$ (with each ring counted as a single sphere, and excluding any hydrogen atoms), the centres $c_i$ and radii $r_i$ for each sphere and the adjacency matrix $T$ describing intersection of spheres, where 
\[
T_{ij}=\left\{\begin{array}{cc} 
1 & \textrm{if spheres {\it i} and {\it j} intersect}\\
0 & \textrm{otherwise (or }i=j) .
\end{array}\right.
\]

The surface area $\mathcal{A}$ of the molecule is calculated as the area of each sphere minus the ``missing parts" where two spheres intersect:
\begin{equation}\label{eqn:surf_area}
\mathcal{A} = 2\pi \sum_{i}\left(2r_{i}^{2}-\left(r_{i}\sum_{j}T_{ij}|r_{i}-\lambda_{ij}|\right)\right).
\end{equation}
This value is used to re-scale each of the starting constructs such that the surface area of the molecule is equal to that of a unit sphere (or $4\pi$) to address the observation that Riemannian geometry treats two objects which differ only in size as having equivalent shape. This re-scaling is accounted for in the final descriptors with some weighting so as not to dominate the similarity calculation. \\
\\
From the initial data, a map is constructed to `unwrap` the surface onto the complex plane $\mathbb{C}$ in a process we refer to as piecewise stereographic projection. This requires an atom to be selected as a starting point from which to construct our map, which we refer to as the base sphere. This is taken to be the atom closest to the centre of mass by first finding the centroid of the molecule and then taking the atom with the smallest Euclidean distance from this point. The \textit{Riemannian metric} $g=\Phi_{ps}^{\ast}(g_{Euc})$ induced by the mapping $\Phi_{ps}:\mathbb{C}\rightarrow \mathcal{S}\subset \mathbb{R}^{3}$ takes the form
\begin{equation}\label{eqn:metric_form_gen}
g = \left\{\begin{array}{cc}
\frac{4r_{B}^{2}}{(1+|z|^{2})^{2}}(dx^{2}+dy^{2}) & \mathrm{if} \  z \in\mathcal{C}\\ 
& \\
\frac{C_{1}}{(|z-A_{1}|^{2}+B_{1})^{2}}(dx^{2}+dy^{2}) & \mathrm{if} \  z \in \mathbb{D}(a_{1},R_{1}),\\
 & \\
\frac{C_{2}}{(|z-A_{2}|^{2}+B_{2})^{2}}(dx^{2}+dy^{2}) & \mathrm{if} \  z \in \mathbb{D}(a_{2},R_{2}),\\
\vdots & \vdots \\
\frac{C_{N-1}}{(|z-A_{N-1}|^{2}+B_{N-1})^{2}}(dx^{2}+dy^{2}) & \mathrm{if} \  z \in \mathbb{D}(a_{N-1},R_{N-1}),\\
\end{array}\right.
\end{equation}
where $r_{B}$ is the radius of the base sphere and 
\[
\mathcal{C} = \mathbb{C}\backslash \mathbb{D}(a_{1},R_{1})\cup \mathbb{D}(a_{2},R_{2})\cup \cdots \cup \mathbb{D}(a_{N-1},R_{N-1}),
\]
is the complement of the discs $\mathbb{D}(a_{1},R_{1}), \ldots, \mathbb{D}(a_{N-1},R_{N-1})$ which corresponds to the points in the base sphere. \\
\\
The RGMolSA descriptor uses the explicit form of the Riemannian metric provided by piecewise stereographic projection to approximate the low-lying eigenfunctions of the Laplacian $\Delta$. In \cite{cole2022riemannian}, we compared the RGMolSA descriptor for Sildenafil, Vardenafil and Tadalafil, a series of PDE5 inhibitors that are known to occupy a similar volume in the binding pocket of their target protein, and thus have similar shape (Figure \ref{fig:SVTstruct}) \cite{Cleves_Jain_2008}. Vardenafil is a classic example of a ``me-too" drug, where only a few small modifications have been made to the structure of Sildenafil. As these are both highly similar chemically, they would be expected to have close to the same shape. Tadalafil on the other hand is chemically quite different from the other two, but inspection of the molecules in the pocket of PDE5 reveals they occupy a similar binding pose, and thus would also be expected to have similar shape.  In this article, for ease of comparison with the previous work \cite{cole2022riemannian}, we again use these three molecules as the basis for investigating the new shape descriptor. \\

\begin{figure}
\centering
\subcaptionbox*{}{\includegraphics[width=0.30\textwidth]{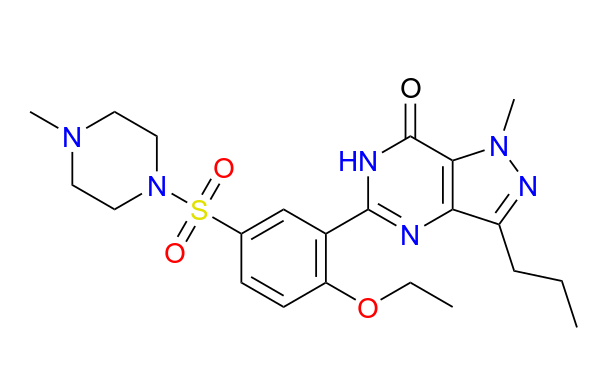}}%
\hfill
\subcaptionbox*{}{\includegraphics[width=0.30\textwidth]{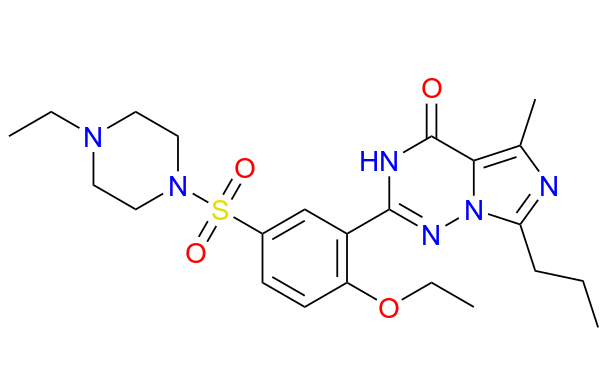}}%
\hfill
\subcaptionbox*{}{\includegraphics[width=0.30\textwidth]{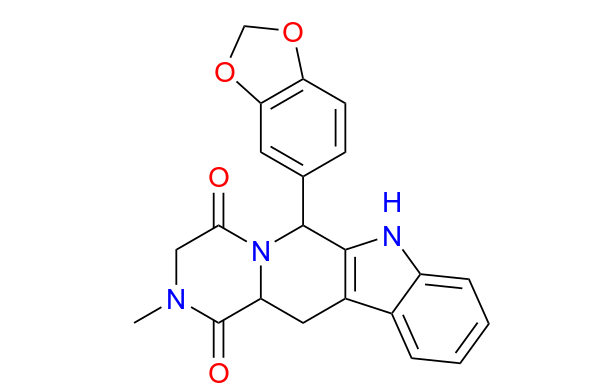}}%
\\
\vspace{-10pt}
\subcaptionbox{Sildenafil \\ Pfizer \\ First Sold: 1998}{\includegraphics[width=0.30\textwidth]{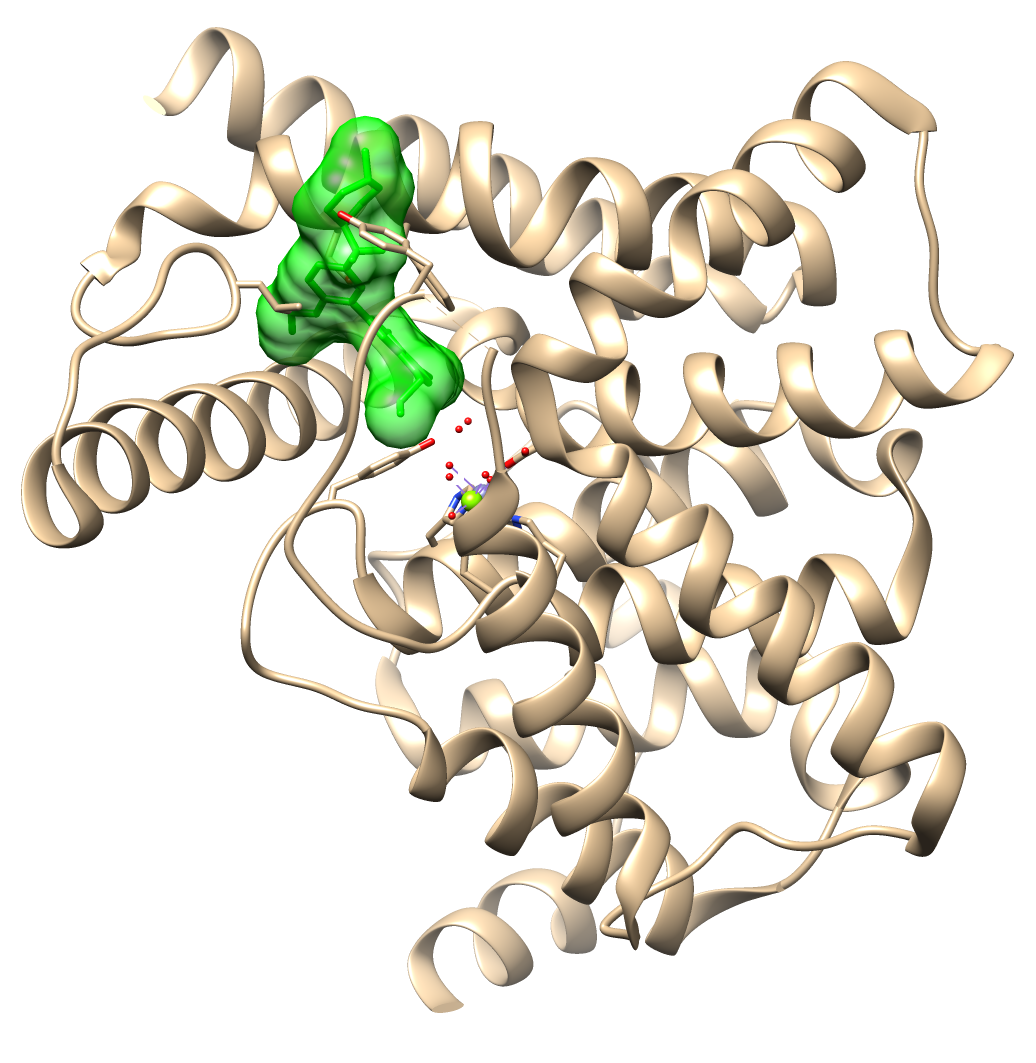}}%
\hfill
\subcaptionbox{Vardenafil \\ Bayer \\ First Sold: 2003}{\includegraphics[width=0.30\textwidth]{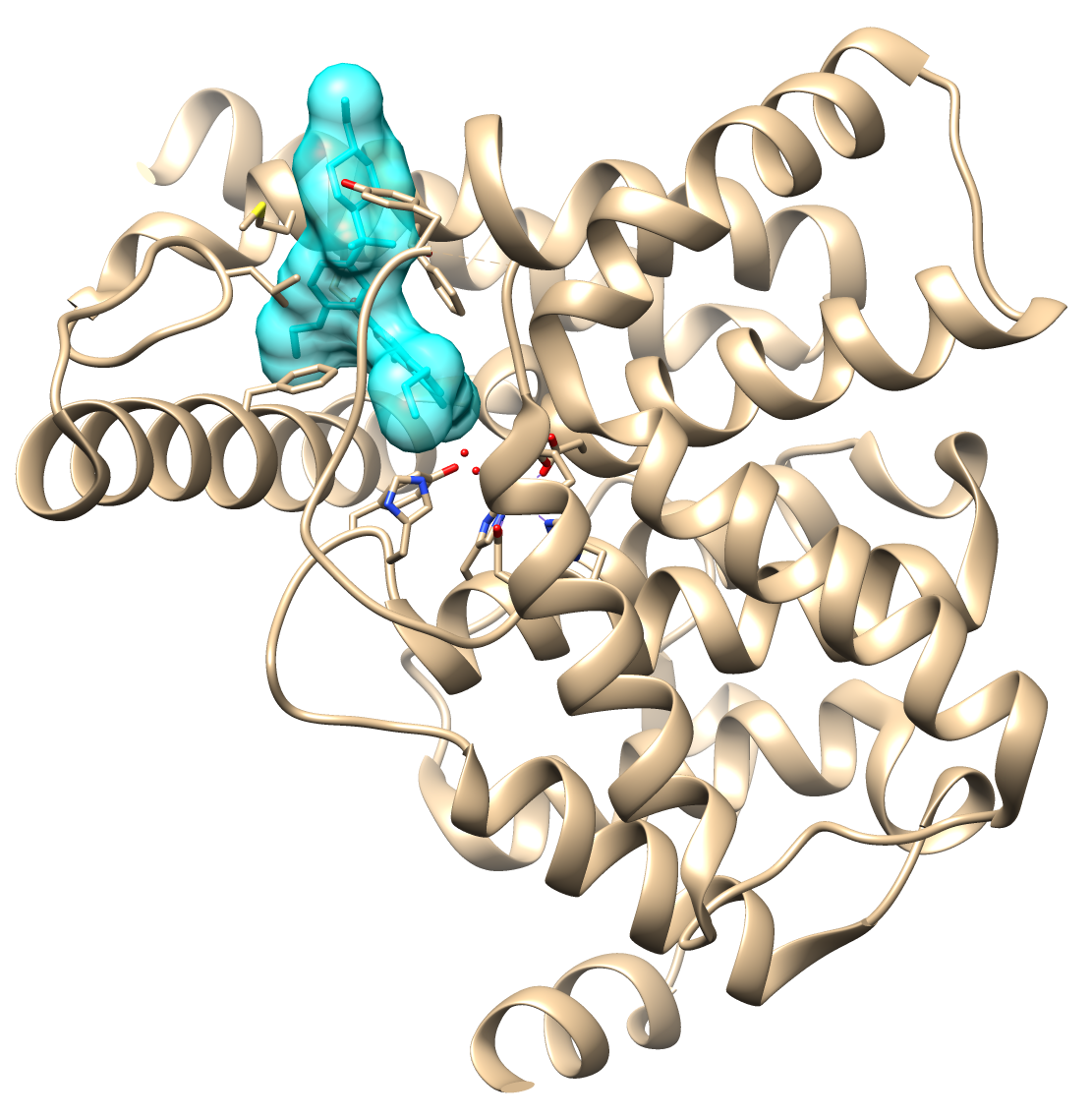}}%
\hfill
\subcaptionbox{Tadalafil \\ Lilly \\ First Sold: 2003}{\includegraphics[width=0.30\textwidth]{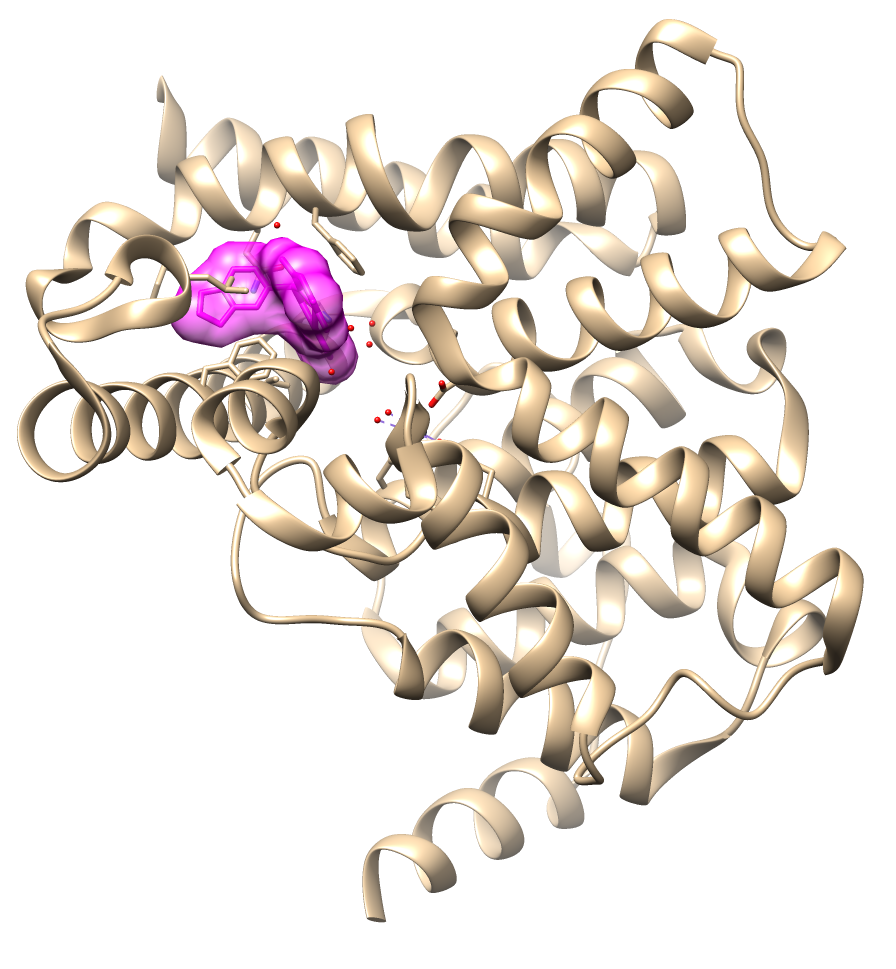}}%
\caption{PDE5 inhibitors Sildenafil, Vardenafil and Tadalafil of known shape similarity. Tadalafil (different chemical structure, similar shape) is an example of a scaffold hop from the first in class drug Sildenafil, and offers greatly improved performance, while Vardenafil (a "me-too" follow-up drug) only offers minor improvements.} \label{fig:SVTstruct}
\end{figure}

While RGMolSA was found to give a good description of shape, it has a possible deficiency due to the dependence of the results on the choice of base sphere, which in turn determines the trial functions for calculating the integrals used to construct the descriptor. The geometry of the surface near the base sphere is well described, but for atoms further away a greater number of eigenvalues would be needed for accurate description of the surface. This problem is greater for larger molecules and can lead to the introduction of numerical errors when the molecule is large enough. We handled such errors by ignoring any contributions from regions with numerical radii less than $10^{-9}$; however, this forces a somewhat artificial `locality' upon the shape descriptor meaning that it probably only accurately captures the shape near to the base sphere. 

In the following section we outline the theory underpinning the KQMolSA descriptors, that again uses the {Riemannian metric} to approximate the geometry of the surface. The resulting descriptors lie in the manifold $GL(N,\mathbb{C})/U(N)$ to give a global descriptor of molecular geometry with reduced dependence on the starting position. Figure \ref{fig:kqflow} summarises the steps in computing these, using Sildenafil as an example. While the descriptor itself does depend upon the choices made and the position of the surface within $\mathbb{R}^{3}$, this is easily accounted for within the space $GL(N,\mathbb{C})/U(N)$.  This makes computing the `distance' between the shape descriptors particularly straightforward. 


\begin{figure}
    \centering
    \includegraphics[scale=0.5]{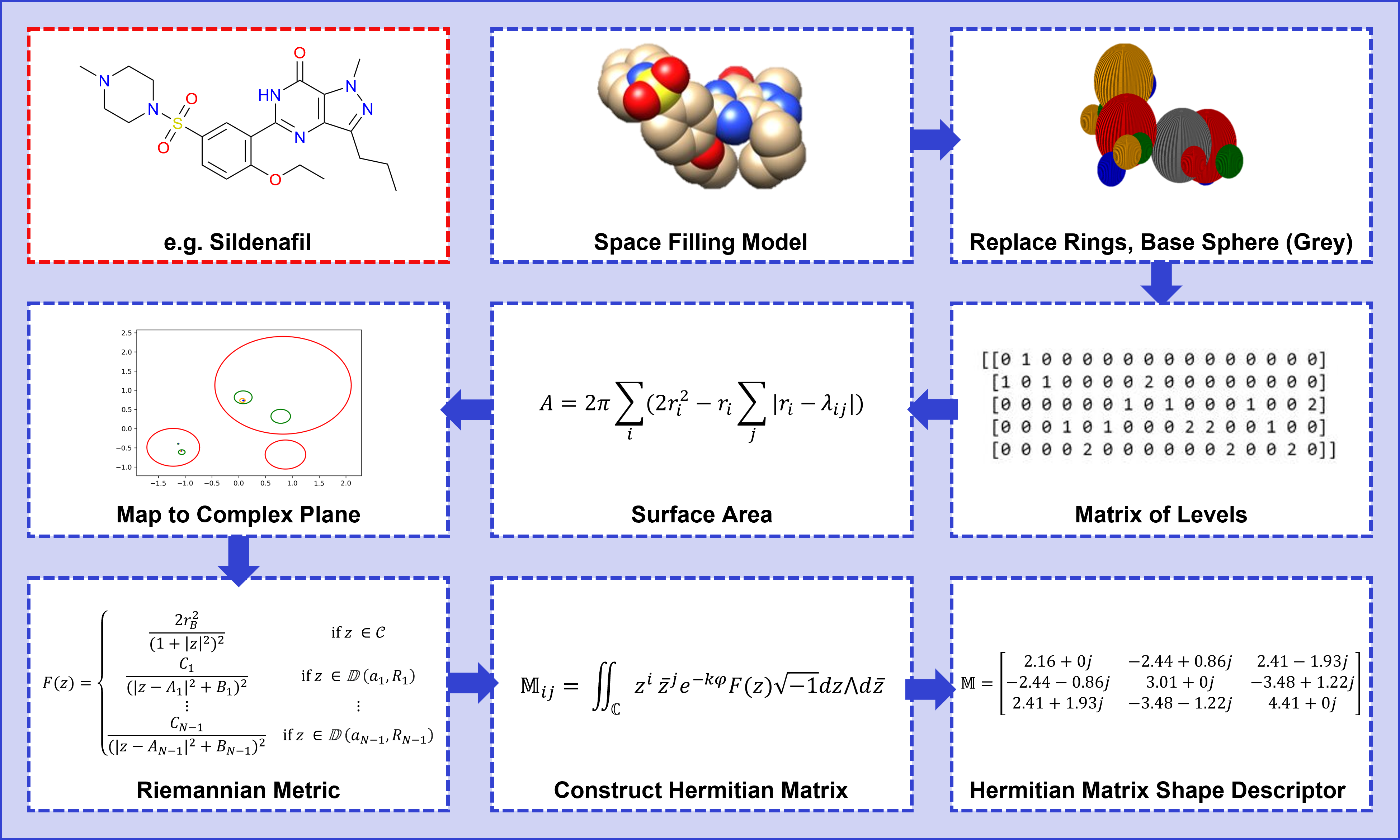}
    \caption{Key steps involved in the computation of the KQMolSA surface descriptor for Sildenafil (a PDE5 inhibitor).}
    \label{fig:kqflow}
\end{figure}

\section{The Mathematics of K\"ahler Quantization} \label{sect:2}
\subsection{Overview of the Theory}
We should say immediately that the theory of K\"ahler quantization is far too advanced to be able to detail in the current paper.  For readers with sufficient mathematical background, a good account (and the original account of its use as a numerical technique) is given in \cite{DonNRCG}. An exposition, aimed at readers with a general scientific background, of the mathematical theory is currently being written by two of the authors \cite{HMbook}.\\ 
\\
The theory is concerned with the geometry of {\it complex manifolds} (shapes that locally look like $\mathbb{C}^{n}$); any surface that sits in $\mathbb{R}^{3}$ is a complex manifold as it locally looks like a copy of the complex numbers $\mathbb{C}$ (i.e. $n=1$). More concretely, we will be concerned with the surfaces that are topologically equivalent to $\mathbb{S}^{2}$; in the language of complex manifolds, the sphere is often referred to as the {\it Riemann Sphere} and denoted $\mathbb{CP}^{1}$. The restriction on the topology of the surface is justified by the fact that chemists do not expect any activity in the centre of rings occurring in most drug-like molecules. The exceptions to this are macrocyclic molecules (those with large rings of more than 12 atoms) where genuine activity occurs in the centre of the ring. Such molecules are therefore excluded from comparison by both methods proposed. \\
\\
The natural class of functions to work with when dealing with complex manifolds are those that are complex differentiable, often called {\it holomorphic} functions. We consider a general complex manifold $X$; unfortunately, if the manifold $X$ is compact, the only holomorphic functions $f:X\rightarrow \mathbb{C}$ are constant. Thus we cannot hope to understand $X$ simply by studying the holomorphic functions on $X$. A generalisation of the notion of a holomorphic function is that of a section of a holomorphic line bundle $L$ with base $X$. For readers familiar with the theory, a function is a section of the trivial bundle. A line bundle is positive if there is a Hermitian metric $h$ on $L$ with positive curvature. A foundational result of Kodaira \cite{GrifHarr} says that if the line bundle $L$ is positive then for large enough $k$ the tensor power $L^{k}$, has a lot of holomorphic sections.  In fact, the space of all such sections, denoted $H^{0}(L^{k})$, is a complex vector space of dimension that has order $O(k^{n})$ as $k\rightarrow \infty$. \\
\\
The curvature of a positively curved Hermitian metric $h$ gives rise to an object called a {\it K\"ahler form}, $\omega$, which in turn gives rise to a Riemannian metric $g$ (the mathematical object being used in \cite{cole2022riemannian} to describe shape). It turns out that the set of all positively curved Hermitian metrics on a line bundle $L$ can be identified with the set of all real-valued functions $\varphi:X \rightarrow \mathbb{R}$ that satisfy, in some local coordinate $z$, the $\partial\bar{\partial}$-equation
\[
\sqrt{-1}\partial\bar{\partial} \varphi =\omega-\omega_{0}
\]
where $\omega$ is the K\"ahler form of the metric and $\omega_{0}$ is a fixed reference K\"ahler form. We will give more detail on the differential operators $\partial$ and $\bar{\partial}$ in Section \ref{subsec:IinP}; in particular, we will explain that in the molecular surface setting, the $\partial\bar{\partial}$-equation is really just the familiar Poisson equation in the plane. The function $\varphi$ is called a {\it K\"ahler potential} for $\omega$. The associated potential is not unique but any two differ by a constant; this does not affect the metric which is constructed by taking two derivatives of the potential. However, we will see that the addition of a constant to a potential will have the affect of scaling the Hermitian matrix we produce as a shape descriptor by a positive real number and we will be required to find the `optimal' rescaling in our distance calculation.\\
\\
To summarise, what we have for a positive Hermitian line bundle $(L,h)\rightarrow X$ are:
\begin{itemize}
    \item   a K\"ahler form $\omega$ and a K\"ahler potential $\varphi:X\rightarrow \mathbb{R}$,
    \item a complex vector space $H^{0}(L^{k})$.
\end{itemize}
What K\"ahler quantization amounts to is relating the geometry described by the K\"ahler potentials (an infinite dimensional space of functions) to the finite dimensional complex vector space $H^{0}(L^{k})$. This theme occurs throughout numerical analysis and shape description, for example in the theories of Fourier analysis, spherical harmonics, Taylor series, all of which produce a finite-dimensional vector space out of some infinite-dimensional set of functions.
\subsection{Quantization and Tian's Theorem}
The data $(L,h)\rightarrow X$ allows for a natural $\mathcal{L}^{2}$-inner product on the vector space of sections $H^{0}(L^{k})$.  Given sections $s_{1},s_{2}\in H^{0}(L^{k})$, we compute
\[
\langle s_{1},s_{2}\rangle := \int_{X}h_{k}(s_{1},s_{2})\dfrac{\omega^{n}}{n!},  
\]
where $h_{k}$ is the Hermitian metric induced on $L^{k}$ by $h$, and ${\omega^{n}}/{n!}$ is the volume element produced by the K\"ahler form. It is this inner product that is the quantization of the data $(L,h)\rightarrow X$. The space of all (Hermitian) inner products on a complex $N$-dimensional vector space can be thought of as $GL(N;\mathbb{C})/U(N)$. This is a negatively curved symmetric space and has a natural notion of distance on it; it is this distance that we will use to measure shape similarity (see Section \ref{section:d_measure}).\\
\\
To recover the geometry defined by $(L,h)\rightarrow X$ from the quantization, we choose a basis $\{s_{j}\}$ of the vector space $H^{0}(L^{k})$ which gives rise to the matrix representation of the inner product
\[
\mathbb{M}_{ij} :=\langle s_{i},s_{j}\rangle.
\]
If we let $v$ be the vector of sections
\[
v=\left(s_{1},s_{2},\ldots s_{N}\right),
\]
then we can define a K\"ahler potential (recalling that the sections are locally defined holomorphic functions) $\tilde{\varphi}$ by
\[
\tilde{\varphi}:=-\dfrac{1}{k}\log\left(v^{\ast}\mathbb{M}^{-1}{v}\right). 
\]
\begin{theorem}[Tian, \cite{Tian}]
Let $(X,L,h)$ be a complex manifold with holomorphic line bundle $L$ and positively curved Hermitian metric $h$ with curvature $\omega$. If we produce another K\"ahler form 
\[
\widetilde{\omega} = \omega_{0}+\sqrt{-1}\partial\bar{\partial}\tilde{\varphi},
\]
then
\[
\|\omega-\widetilde{\omega}\|_{C^{0}} = O(k^{-2}).
\]
\end{theorem}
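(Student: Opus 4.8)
The plan is to reduce the statement to the near-diagonal asymptotics of the \emph{Bergman kernel} and then to extract the $O(k^{-2})$ rate directly from its expansion. First I would pick an $\mathcal{L}^{2}$-orthonormal basis $\{s_j\}$ of $H^{0}(L^{k})$; for such a basis $\mathbb{M}$ is the identity and $v^{\ast}\mathbb{M}^{-1}v=\sum_j|s_j|^{2}$, and since $v^{\ast}\mathbb{M}^{-1}v$ is manifestly independent of the chosen basis, the value of $\tilde{\varphi}$ is the same whatever basis one works with. Passing to a local holomorphic frame $e$ for $L^{k}$, write $s_j=f_j\,e$ with $f_j$ holomorphic and $|e|^{2}_{h_k}=e^{-k\phi}$ for a local weight $\phi$ of $h$; then $\sum_j|f_j|^{2}=e^{k\phi}\,\rho_k(x)$, where
\[
\rho_k(x):=\sum_j\lvert s_j(x)\rvert^{2}_{h_k}
\]
is the Bergman density function. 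Thus $\tilde{\varphi}$ differs from the local weight / reference potential only by $\tfrac1k\log\rho_k$, and — matching normalisations — the weight and reference contributions are exactly the ones removed by $\omega$ and $\omega_0$ in the difference $\widetilde{\omega}-\omega$. Up to the sign fixed by the conventions one therefore obtains
\[
\widetilde{\omega}-\omega=\pm\frac{1}{k}\,\sqrt{-1}\,\partial\bar\partial\log\rho_k,
\]
so the theorem becomes an estimate on $\sqrt{-1}\,\partial\bar\partial\log\rho_k$ in the $C^{0}$-norm.

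The step I expect to be the main obstacle — and the genuine content — is the \emph{Tian--Yau--Zelditch expansion of the Bergman kernel}: uniformly on $X$ one has
\[
\rho_k(x)=k^{n}\Bigl(1+\tfrac12 S(x)\,k^{-1}+O(k^{-2})\Bigr),
\]
with $S$ the scalar curvature of $g$, and moreover this expansion may be differentiated, with uniform control on the $C^{m}$-norm of each remainder. Proving it at the $C^{2}$-strength needed here is the analytic heart: it can be done by Tian's peak-section method (constructing sections of $L^{k}$ that concentrate in a Gaussian fashion near a given point and estimating the resulting sums) or by the microlocal analysis of the Szeg\H{o} kernel of the unit circle bundle of $L^{\ast}$ via the Boutet de Monvel--Sj\"ostrand parametrix; the expansion to all orders, with explicit coefficients, is due to Catlin, Zelditch and Lu. Everything on either side of this input is formal.

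Granting the expansion, the rest is bookkeeping. Taking logarithms,
\[
\log\rho_k=n\log k+\log\!\Bigl(1+\tfrac12 S\,k^{-1}+O(k^{-2})\Bigr)=n\log k+\tfrac12 S\,k^{-1}+O(k^{-2}),
\]
the remainder being $C^{2}$-small because the Bergman expansion is. Since $\sqrt{-1}\,\partial\bar\partial$ kills the constant $n\log k$,
\[
\sqrt{-1}\,\partial\bar\partial\log\rho_k=\frac{1}{2k}\,\sqrt{-1}\,\partial\bar\partial S+O(k^{-2})\quad\text{in }C^{0},
\]
where the $O(k^{-2})$ is the $C^{0}$-norm of the complex Hessian of a $C^{2}$-bounded $O(k^{-2})$ term. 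Feeding this into the reduction and dividing by $k$ yields
\[
\widetilde{\omega}-\omega=\pm\frac{1}{2k^{2}}\,\sqrt{-1}\,\partial\bar\partial S+O(k^{-3})=O(k^{-2})\quad\text{in }C^{0},
\]
as claimed. It is worth noting that the second term $\tfrac12 S\,k^{n-1}$ of the Bergman expansion (together with a $C^{2}$-bound on the next remainder) is exactly what is needed to reach the stated $k^{-2}$ rate: the crude estimate $\rho_k\sim k^{n}$ alone would only give $\widetilde{\omega}-\omega\to0$, with no rate.
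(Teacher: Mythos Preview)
The paper does not give a proof of this theorem: it is stated as a known result and attributed to Tian via the citation \cite{Tian}, with no argument supplied. So there is nothing in the paper to compare your proposal against line by line.

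That said, your strategy is the standard modern route to the result and is sound. Reducing $\widetilde{\omega}-\omega$ to $\tfrac{1}{k}\sqrt{-1}\,\partial\bar\partial\log\rho_{k}$ via the Bergman density $\rho_{k}=\sum_{j}|s_{j}|^{2}_{h_{k}}$ is exactly right, and the Tian--Yau--Zelditch/Catlin expansion $\rho_{k}=k^{n}(1+\tfrac{1}{2}Sk^{-1}+O(k^{-2}))$, differentiable to the needed order, is indeed the analytic heart. Your observation that the $O(k^{-2})$ rate genuinely requires the scalar-curvature term (and a $C^{2}$-controlled remainder), not merely $\rho_{k}\sim k^{n}$, is correct and worth keeping. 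One historical caveat: Tian's original argument via peak sections gave a weaker rate; the $O(k^{-2})$ stated here really comes from the later full asymptotic expansion of Zelditch, Catlin and Lu, so your attribution of the analytic input is more accurate than the paper's shorthand.
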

Paraphrasing this theorem, we can say any K\"ahler form coming from a K\"ahler potential $\varphi$ can be well approximated by the K\"ahler form coming from the `algebraic' function $\widetilde{\varphi}$.  If we pick local complex coordinates $z_{1},z_{2},\ldots, z_{n}$ then the term $v^{\ast}\mathbb{M}^{-1}v$ is just a power series in the coordinates. In the case of 
 a molecular surface, we will have something like a polynomial.  This is the sense in which the function $\widetilde{\varphi}$ is similar to a truncated Taylor series for the original function $\varphi$. The theorem then says that this series really does converge.\\
\\
Tian's Theorem is stated for smooth metrics (those where one can take an arbitrary number of derivatives of the K\"ahler potential $\varphi$); in practice (see Section \ref{subsec:IinP}), we will be working with metrics where the potentials are in $\mathrm{C}^{2}(X)$, that is twice continuously differentiable.  The theory of approximating such metrics algebraically has not been written down but we will demonstrate that we get a method that does produce meaningful shape comparisons. We expect that, suitably adapted to this setting, something like Tian's Theorem is still true; for example, the case of potentials with lower regularity is discussed in \cite{BerKel}.
\subsection{Implementation in Practice} \label{subsec:IinP}
As mentioned already, in practice we take $X=\mathbb{CP}^{1}$ the Riemann  sphere and the line bundle to be the anticanonical bundle $K^{\ast}_{\mathbb{CP}^{1}}=\mathcal{O}(2)$. The K\"ahler form $\omega$, can be explicitly constructed from the Riemannian metric $g$, and in the coordinates furnished by the piecewise stereographic projection map $\Phi_{ps}$, we can use the form of the metric  (\ref{eqn:metric_form_gen}) to get 
\[
\omega = F(z)\sqrt{-1}dz\wedge d\overline{z},
\]
where $F:\mathbb{C}\rightarrow \mathbb{R}_{+}$ is the `metric function' given by
\begin{equation}\label{eqn:form_of_KF}
F(z) = \left\{\begin{array}{cc}
\frac{2r_{B}^{2}}{(1+|z|^{2})^{2}} & \mathrm{if} \  z \in\mathcal{C},\\ 
& \\
\frac{C_{1}}{(|z-A_{1}|^{2}+B_{1})^{2}} & \mathrm{if} \  z \in \mathbb{D}(a_{1},R_{1}),\\
 & \\
\frac{C_{2}}{(|z-A_{2}|^{2}+B_{2})^{2}} & \mathrm{if} \  z \in \mathbb{D}(a_{2},R_{2}),\\
\vdots & \vdots \\
\frac{C_{N-1}}{(|z-A_{N-1}|^{2}+B_{N-1})^{2}} & \mathrm{if} \  z \in \mathbb{D}(a_{N-1},R_{N-1}).\\
\end{array}\right.
\end{equation}
Note we have replaced, in the metric $g$, the real symmetric 2-tensor $dx^{2}+dy^{2}$ with the antisymmetric form ${(\sqrt{-1}/2)dz\wedge d\bar{z}}$, where ${dz=dx+\sqrt{-1}dy}$ and ${d\bar{z}=dx-\sqrt{-1}dy}$.

To find the K\"ahler potential $\varphi:\mathbb{C} \rightarrow \mathbb{R}$, we solve the `$\partial\overline{\partial}$-equation'
\[
\omega = \sqrt{-1}\partial\overline{\partial}\varphi.
\]
If we consider the complex differential operators
\[
\dfrac{\partial}{\partial z} = \dfrac{1}{2}\left(\dfrac{\partial}{\partial x}-\sqrt{-1}\dfrac{\partial}{\partial y}\right) \qquad \mathrm{and} \qquad \dfrac{\partial}{\partial \overline{z}} = \dfrac{1}{2}\left(\dfrac{\partial}{\partial x}+\sqrt{-1}\dfrac{\partial}{\partial y}\right),
\]
then the $\partial\overline{\partial}$-equation is equivalent to solving the Poisson equation
\[
\frac{\partial^{2}\varphi}{\partial z\partial\overline{z}}=\frac{1}{4}\Delta_{Euc} \varphi = F,
\]
where $\Delta_{Euc}$ is the usual 2-dimensional Laplacian.
We can solve the Poisson problem explicitly to find $\varphi$. The solution can be thought of as having two parts: a `local' part that is found by simply observing that
\[
\frac{\partial^{2}}{\partial z\partial\overline{z}}\left(\frac{C\log(|z-A|^{2}+B)}{B}\right) = \dfrac{C}{(|z-A|^{2}+B)^{2}},
\]
and a `correction term', named thus as the term is needed to ensure the function is in $\mathrm{C}^{2}(\mathbb{C})$. The correction term is a linear combination of functions of the form
\[
\log(|\alpha z+\beta|^{2}),
\]
where we get one term for each sphere. As each of the correction terms is a harmonic function, that is
\[
\Delta \log(|\alpha z+\beta|^{2})=0,
\]
the addition of the correction terms is still a solution of the Poisson equation. It would appear the correction terms are singular at the points $z=-\beta/\alpha$; however, these points always lie outside the disc where the function takes this particular form. We record the form of the potential as a theorem and refer the reader to the appendix (Section~\ref{sec:appendix}) for a derivation of the solution.
\begin{theorem}[Form of K\"ahler potential]
Let $g$ be of the form Equation~(\ref{eqn:metric_form_gen}). In the region associated to the $i^{th}$ sphere, the K\"ahler potential can be written
\[
\varphi(z) = \frac{C_{i}}{B_{i}}\log(|z-A_{i}|^2+B_{i})+\sum_{j=1}^{N}\mathcal{K}_{ij}\log(|\alpha_{ij}z+\beta_{ij}|^{2}),
\]
where $\mathcal{K} \in M^{N\times N}(\mathbb{R})$, and $\alpha, \beta \in M^{N\times N}(\mathbb{C})$.
\end{theorem}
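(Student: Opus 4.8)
The plan is to solve the Poisson equation $\partial_z\partial_{\bar z}\varphi = F$ explicitly, region by region, and then glue the local solutions together so the result is globally $C^2$; recall from the discussion preceding the theorem that this is exactly the $\partial\bar\partial$-equation for the K\"ahler potential, and that $\varphi$ is then determined up to an additive real constant. First I would write down, on each region, an obvious particular solution: on the base region $\mathcal{C}$, where $F = 2r_B^2(1+|z|^2)^{-2}$, the function $2r_B^2\log(1+|z|^2)$; and on $\mathbb{D}(a_i,R_i)$, where $F = C_i(|z-A_i|^2+B_i)^{-2}$, the function $\psi_i := \tfrac{C_i}{B_i}\log(|z-A_i|^2+B_i)$, using the identity $\partial_z\partial_{\bar z}\big(\tfrac{C}{B}\log(|z-A|^2+B)\big) = C(|z-A|^2+B)^{-2}$ quoted just before the theorem (the base region being the case $A=0$, $B=1$, $C=2r_B^2$). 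Since two solutions of the same Poisson equation differ by a harmonic function, on the $i$-th region we have $\varphi = \psi_i + h_i$ with $h_i$ harmonic there, and the content of the theorem is precisely that each $h_i$ is a finite real-linear combination $\sum_{j=1}^N \mathcal{K}_{ij}\log|\alpha_{ij}z+\beta_{ij}|^2$ whose singular points $-\beta_{ij}/\alpha_{ij}$ all lie outside the $i$-th region, so that $h_i$ is genuinely harmonic where it is used.

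To pin down the $h_i$, I would exploit the nested, tree-like structure of the regions furnished by the piecewise stereographic projection (rings are replaced by single spheres and macrocycles excluded, so the surface has genus zero), building $\varphi$ by induction over the tree and introducing one sphere at a time. Crossing an intersection circle $\gamma_j := \partial\mathbb{D}(a_j,R_j)$, continuity of the metric function $F$ forces the parent expression $C_p(|z-A_p|^2+B_p)^{-2}$ (or $2r_B^2(1+|z|^2)^{-2}$ at the root) to equal $C_j(|z-A_j|^2+B_j)^{-2}$ on $\gamma_j$; taking square roots gives the key identity $|z-A_j|^2+B_j = \mu_j\,(1+|z|^2)$ (respectively $\mu_j(|z-A_p|^2+B_p)$) on $\gamma_j$ for a positive constant $\mu_j$. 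Hence, restricted to $\gamma_j$, the mismatch $\psi_{\text{parent}} - \psi_j$ between the two particular solutions is a constant plus the logarithm of a function affine in $z$ and $\bar z$ — which is exactly the boundary data of a harmonic function of the form $\log|\alpha z+\beta|^2$, with $\alpha_j,\beta_j$ read off from the circle $\gamma_j$ and the coefficient left as a free real parameter to be fixed by matching. Iterating down the tree produces the claimed combination of log-modulus terms, and one checks $-\beta_j/\alpha_j$ lands outside $\mathbb{D}(a_j,R_j)$ as the excerpt asserts.

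Finally I would assemble everything by imposing $C^1$ matching of $\varphi$ across every intersection circle. Because $\Delta\varphi = 4F$ with $F$ merely continuous, $C^1$ matching automatically upgrades to $C^2$: the tangential second derivatives match from matching values along the circle, the mixed ones from matching first derivatives, and the remaining transverse second derivative is then pinned by the continuous Laplacian — so $C^1$ is the correct condition to impose and the resulting $\varphi$ has exactly the regularity advertised in Section~\ref{subsec:IinP}. Each circle contributes a finite real-linear system in the coefficients $\mathcal{K}_{ij}$, which one solves (uniquely, modulo the single global additive constant), and then records the coefficients together with the linear data in matrices $\mathcal{K}\in M^{N\times N}(\mathbb{R})$, $\alpha,\beta\in M^{N\times N}(\mathbb{C})$. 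The main obstacle is the middle step: proving that the harmonic corrections needed for global $C^2$-regularity are exactly of the form $\log|\alpha z+\beta|^2$, finitely many, one per sphere, and that the matching system closes up consistently over the whole tree. The lever that makes this work is the identity $|z-A_j|^2+B_j=\mu_j(1+|z|^2)$ on each intersection circle, forced by continuity of $F$; carefully propagating it through the nested regions is the technical heart of the argument and is the computation deferred to the appendix.
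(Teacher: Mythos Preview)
Your approach is correct and takes a genuinely different route from the paper's. The paper also inducts on the number of spheres, but at the inductive step it maps the new region to the unit disc by a M\"obius transformation and then solves the $\bar\partial$-equation via the Dolbeault integral formula: it writes $\psi(w)=\frac{1}{2\pi\sqrt{-1}}\iint_{\mathbb{C}}\frac{H(p)}{p-w}\,dp\wedge d\bar p$, evaluates the disc contribution using the Cauchy--Pompeiu formula and the residue theorem, and then integrates once more and pulls back to obtain the explicit $\log$-terms. Your argument instead writes down the obvious particular solution on each region, identifies the mismatch across each interface, and uses the continuity of $F$ on $\gamma_j$ (your identity $|z-A_j|^2+B_j=\mu_j(|z-A_p|^2+B_p)$ on $\gamma_j$) to see that the mismatch restricted to $\gamma_j$ is the boundary trace of a $\log|\alpha z+\beta|^2$-type harmonic function; you then close up by $C^1$ matching and the nice observation that $C^1$ gluing plus continuity of the Laplacian forces $C^2$. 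The paper's method is more algorithmic and hands you the coefficients $\mathcal{K}_{ij}$ directly from the residue computations, which is convenient for implementation; your method is more elementary (no integral kernels or contour integrals needed), makes transparent \emph{why} exactly one $\log|\alpha z+\beta|^2$ correction per sphere suffices, and isolates the regularity statement cleanly. The one place where your sketch could be tightened is the sentence ``which is exactly the boundary data of a harmonic function of the form $\log|\alpha z+\beta|^2$'': it is worth saying explicitly that an affine function $c+\bar d z+d\bar z$ that is positive on a circle always factors (on that circle) as a constant times $|\alpha z+\beta|^2$, with two choices of $-\beta/\alpha$ lying on opposite sides of the circle, so that the harmonic extension to the required side is available.
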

The matrices $\mathcal{K}, \alpha,$ and  $\beta$  in the previous theorem are easily calculated from the geometric data associated to the molecule and so it is straightforward to describe the K\"ahler potential explicitly.\\ 
\\
The space of global sections $H^{0}(\mathcal{O}(2k)) \cong \mathbb{C}^{2k+1}$ can be identified with the span of the functions
\[
\langle 1,z,z^{2},\ldots,z^{2k} \rangle.
\]
Thus the shape descriptor associated to the surface is the $(2k+1)\times(2k+1)$ Hermitian matrix $\mathbb{M}$ where (considering indices that run from 0 to $2k$)
\begin{equation} \label{eqn:qm_entry}
\mathbb{M}_{ij} =  \iint_{\mathbb{C}}z^{i}\overline{z}^{j} e^{-k\varphi}F(z)\sqrt{-1}dz \wedge d\overline{z}.    
\end{equation}

\subsection{Computing the Relevant Integrals}
\label{sec2.4}
A na\"ive numerical calculation of the integrals described by Equation (\ref{eqn:qm_entry}) gives rise to two obvious problems: firstly, the domain of integration is unbounded (being the whole complex plane $\mathbb{C}$); secondly, the domains and values describing the metric and the K\"ahler potential $\varphi$ could become so small that numerical instabilities start to dominate the contribution of the associated atom. The second problem has been discussed as a limitation in the approximation of the spectrum of the Laplacian \cite{cole2022riemannian}. In this paper, we exploit the fact that the automorphism group of $\mathbb{CP}^{1}$ is the group of M\"obius transformations, $PSL(2,\mathbb{C})$; we can use elements of this group to ensure the coordinates we perform calculations in are always in a numerically controlled region (here we use a unit disc).\\
\\
Put more concretely, let $m\in \{1,2,\ldots,N\}$ index the $m^{th}$ sphere making up the molecular surface, then there is an element $\mathcal{T}_{m}\in PSL(2,\mathbb{C})$ that maps the unit disc
\[
\mathbb{D} =\{z \in \mathbb{C} \ | \ |z|<1\},
\]
onto the region $\mathbb{D}(a_{m},R_{m})$ from Equation (\ref{eqn:metric_form_gen}). We note that if the $m^{th}$ sphere has level $l$, then the pre-image of the regions corresponding to level $(l+1)$ spheres which intersect the $m^{th}$ sphere will describe certain discs properly contained in $\mathbb{D}$.  Hence the contribution of the $m^{th}$ sphere to the matrix described by Equation (\ref{eqn:qm_entry}) is given by
\begin{equation}\label{eqn:disc_integral}
\iint_{\mathbb{D}-\hat{D}}(\mathcal{T}_{m}(w))^{i}(\overline{\mathcal{T}_{m}(w)})^{j}e^{-k\varphi(\mathcal{T}_{m}(w))}F(\mathcal{T}_{m}(w)) \ d\mathcal{T}_{m}(w)\wedge d\overline{\mathcal{T}_{m}(w)},
\end{equation}
where $\hat{D}$ represents the union of the discs corresponding to the next level spheres intersecting the $m^{th}$ sphere. In practice, we account for these higher-level spheres by assigning the value $0$ to the volume form $F(\mathcal{T}_{m}(w)) \ d\mathcal{T}_{m}(w)\wedge d\overline{\mathcal{T}_{m}(w)}$ whenever $w \in \hat{D}$ (note this produces a jump discontinuity in the volume form). Numerical calculation of integrals of the form of Equation~(\ref{eqn:disc_integral}) is done by splitting into an angular and radial direction and then performing successive applications of the trapezium rule; we choose a radial step size corresponding to $n_{r}=15$ integration points and an angular step size corresponding to taking $n_{\theta}=10$ points. This seems to achieve a reasonable accuracy; for example, one can check the area integral for a given integration scheme. We have also determined that the distance between shape descriptors does not seem to be significantly changed by taking smaller step sizes (Section \ref{section:drdt}).

\subsection{Finding the Distance Between Shape Descriptors} \label{section:d_measure}
Given two positive definite Hermitian matrices $\mathbb{M}_{1}, \mathbb{M}_{2}$, such as those generated by Equation (\ref{eqn:qm_entry}), there are innumerable ways of defining a notion of distance between such matrices.  With regards to the theory of K\"ahler quantization, it is natural to consider  $\mathbb{M}_{1}, \mathbb{M}_{2}$ as two Hermitian inner products on the fixed complex vector space $H^{0}(\mathcal{O}(2k))$. This space is naturally seen as the manifold $GL(2k+1;\mathbb{C})/U(2k+1)$. An inner product is specified by declaring a particular basis to be orthonormal; any basis conjugate under the action of $U(2k+1)$ defines the same inner product.  This space has a natural distance on it; one characterisation of this distance is that shortest paths (geodesics) are given by one-parameter subgroups of $GL(2k+1;\mathbb{C})$, that is by paths of matrices of the form $\mathrm{exp}(tA)$ where $A$ is some $(2k+1)\times(2k+1)$ complex matrix.\\
\\
More explicitly, if $\{v_{1},v_{2},\ldots,v_{2k+1} \}$ is a basis of $H^{0}(\mathcal{O}(2k))$such that both inner products are represented by diagonal matrices
\[
\mathbb{M}_{1} = \mathrm{Diag}\left(e^{\lambda_{1}},e^{\lambda_{2}},\ldots,e^{\lambda_{2k+1}}\right), \qquad \mathbb{M}_{2} = \mathrm{Diag}\left(e^{\mu_{1}},e^{\mu_{2}},\ldots,e^{\mu_{2k+1}}\right),
\]
then
\begin{equation}\label{eqn:distance}
d(\mathbb{M}_{1},\mathbb{M}_{2})=k^{-\frac{3}{2}}\sqrt{\sum_{i=1}^{2k+1}(\lambda_{i}-\mu_{i})^{2}}.    
\end{equation}
The factor of $k^{-3/2}$ ensures that the distances stabilise as $k\rightarrow \infty$ (see Theorem 1.1 in \cite{ChenSun}). It will be useful to consider the following more compact form for the distance
\begin{equation}\label{eqn:distance_compact}
d(\mathbb{M}_{1},\mathbb{M}_{2})=k^{-\frac{3}{2}}\sqrt{\sum_{i=1}^{2k+1}(\log(\eta_{i}))^{2}},    
\end{equation}
where $\{{\eta_{i}\}}$ are the eigenvalues of the matrix $\mathbb{M}_{1}^{-1}\mathbb{M}_{2}$. \\
\\
It is a well-known fact that the automorphism group of the Riemann sphere $\mathbb{CP}^{1}$ is the group of M\"obius transformations $PSL(2,\mathbb{C})$.  Roughly speaking, the subgroup $PSU(2)\subset PSL(2,\mathbb{C})$ corresponds to rotations of the original surface and the remaining maps correspond to reparameterisations that preserve the complex structure. If $\varpi \in PSL(2,\mathbb{C})$ is an automorphism of the form
\[
\varpi (z) = \frac{\alpha z+\beta}{\gamma z+ \delta},
\]
then $\varpi$ also acts on the vector space $H^{0}(\mathcal{O}(2k))$. In representation theoretic terms, this action is the representation induced on $\mathrm{Sym}_{2k}(\mathbb{C}^{2})$ by the standard representation of $SL(2,\mathbb{C})$. If we denote the element of $SL(2k+1,\mathbb{C})$ by $\vartheta(\varpi)$  (see \cite{Hash}, Lemma 8) and the original shape descriptor computed in the $z$-coordinate by $\mathbb{M}$, then the shape descriptor computed in the $\varpi (z)$-coordinate will be 
\[ \left(\vartheta(\varpi)\right)^{\ast}\mathbb{M}\left(\vartheta(\varpi)\right).
\]

As mentioned in Section \ref{sect:2}, the fact that the K\"ahler potential is only defined up to the addition of a constant means we can also scale the Hermitian matrix $\mathbb{M}$ by a positive constant. Hence our calculation of distance between two shape descriptors $\mathbb{M}_{1}$ and $\mathbb{M}_{2}$ becomes the concrete problem of minimising, over $(p,\vartheta) \in \mathbb{R}\times SL(2,\mathbb{C})$, 
\[
\zeta(p,\vartheta) = \sum_{i=1}^{2k+1}(\log(\eta_{i}))^{2},
\]
where $\{{\eta_{i}\}}$ are the eigenvalues of the matrix $\mathbb{M}_{1}^{-1}e^{p}\left(\vartheta(\varpi)\right)^{\ast}\mathbb{M}_{2}\left(\vartheta(\varpi)\right)$.\\
\\
It is easy to see that the value of $p$ at a critical point of $\zeta$ is independent of the element $\vartheta$. Elementary calculus yields that the value of $p$ is given by
\[
p=-\frac{1}{2k+1}\sum_{i=1}^{2k+1}\log(\tilde{\eta}_{i}),
\]
where $\{\tilde{\eta}_{i}\}$ are the eigenvalues of the matrix $\mathbb{M}_{1}^{-1}\mathbb{M}_{2}$. As the matrix $\left(\vartheta(\varpi)\right)$ has unit determinant, the value of $p$ does not depend up the $SL(2,\mathbb{C})$ action on the Hermitian matrix $\mathbb{M}_{2}$. We thus reduce the distance calculation to a minimisation over the six-dimensional Lie group $SL(2,\mathbb{C})$.\\
\\
Note that the distance between the shape descriptors given by Equation~(\ref{eqn:distance}) is the distance between the molecular shapes {\it after} they have been re-scaled to have area $4\pi$. Hence the distance between two molecular surfaces $\mathcal{S}_{1}$ and $\mathcal{S}_{2}$ should include a component to reflect the difference in area between $\mathcal{S}_{1}$ and $\mathcal{S}_{2}$. As we are interested in producing a similarity score rather than a distance between two inputs, we do not take this point up further in the article. Our initial attempts at creating a similarity score are detailed in the subsequent section.\\
\\
The remaining minimisation over $SL(2,\mathbb{C})$ is done by parameterising a generic matrix by the $6$ real variables $x_{1},...x_{6}$ and taking
\[
\varpi(x_{1},x_{2},\ldots,x_{6}) = \left(\begin{array}{cc}
x_{1}+\sqrt{-1}x_{2} & x_{3}+\sqrt{-1}x_{4}
\\
x_{5}+\sqrt{-1}x_{6} & \ast
\end{array}\right),
\]
where $\ast$ is chosen to ensure $\det(\varpi)=1$. To perform the minimisation, we use algorithms that do not require the input of a gradient vector, such as  Nelder--Mead or Powell methods \cite{NumRec}.  These are implemented using off-the-shelf packages in SciPy \cite{2020SciPy-NMeth}.  We found that for $k=1$ there was very little difference between the results for either method; the minimisation algorithm converges to produce a robust distance value. For $k=2$ the minimisation methods appear to be a little less stable and occasionally did not converge. One way around this was to use the element of $SL(2,\mathbb{C})$ found by the $k=1$ minimisation as the initial guess for the $k=2$ step (otherwise the identity matrix was used). We anticipate that one might be able to improve this process; for example, by computing the gradient of the function to be minimised explicitly and then using this in an algorithm such as conjugate gradient descent.\\
\\
One further consideration in implementing the distance measure between two matrices was in shape descriptors for $k>2$ (and for $k=2$ in some cases), where numerical instability exists within the method. Occasionally non-positive definite matrices are produced, that cannot be compared using the above approach. As Hermitian matrices that differ only by scale can be considered equivalent, such cases have been treated by scaling one matrix by a factor of 10, 100 or 1000 as needed in order to bring the eigenvalues into the range required for consideration with Python. 

\section{Initial Case Study: Phosphodiesterase 5 (PDE5) Inhibitors}
\label{sec:4}

\subsection{Tuning the Parameters $\mathbf{n_{r}}$ and $\mathbf{n_{\theta}}$}\label{section:drdt}

To determine the effect of varying the parameters $n_r$ and $n_\theta$ (Section~\ref{sec2.4}) on the quality of the shape descriptors produced, we considered three sets of parameters: $n_r=200$ and $n_\theta=100$; $n_r=50$ and $n_\theta=25$; $n_r=15$ and $n_\theta=10$. The distances produced between the descriptor for each set and the area returned during the computation of the relevant integrals (which should be $\sim12.57$ for an accurate descriptor, as constrained by the choice of scaling the surface area to $4\pi$) are reported here for Sildenafil (Table \ref{tab:tunedrdt_S}), Vardenafil (Table \ref{tab:tunedrdt_V}) and Tadalafil (Table \ref{tab:tunedrdt_T}). 

\begin{center}
\begin{table}[h]
\centering
\caption{Computed distances between descriptors of Sildenafil generated using different values of $n_r$ and $n_\theta$ for $k=1$. The area reported is that returned by the integration step.}
\begin{tabular}{|c|c|c|c|} 
\hline 
 & \bf (200, 100), area = 12.59 & \bf (50, 25), area = 12.62 & \bf (15, 10), area = 12.62 \\
 \hline
\bf (200, 100) & - & 0.032 & 0.032 \\
\hline
\bf (50, 25) & 0.038 & - & 0.040\\
\hline
\bf (15, 10) & 0.038 & 0.040 & -\\
\hline
\end{tabular}
\label{tab:tunedrdt_S}
\end{table}
\end{center}
\vspace{-30pt}
\begin{center}
\begin{table}[h]
\centering
\caption{Computed distances between descriptors of Vardenafil generated using different values of $n_r$ and $n_\theta$ for $k=1$. The area reported is that returned by the integration step.}
\begin{tabular}{|c|c|c|c|} 
\hline 
 & \bf (200, 100) area = 12.57 & \bf (50, 25), area = 12.58 & \bf (15, 10), area = 12.58 \\
 \hline
\bf (200, 100) & - & 0.005 & 0.005 \\
\hline
\bf (50, 25) & 0.005 & - & 0.004 \\
\hline
\bf (15, 10) & 0.005 & 0.004 & -\\
\hline
\end{tabular}
\label{tab:tunedrdt_V}
\end{table}
\end{center}
\vspace{-30pt}
\begin{center}
\begin{table}[h!]
\centering
\caption{Computed distances between descriptors of Tadalafil generated using different values of $n_r$ and $n_\theta$ for $k=1$. The area reported is that returned by the integration step.}
\begin{tabular}{|c|c|c|c|} 
\hline 
 & \bf (200, 100), area = 14.32 & \bf (50, 25), area = 14.37 & \bf (15, 10), area = 14.37 \\
 \hline
\bf (200, 100) & - & 0.003 & 0.003 \\
\hline
\bf (50, 25) & 0.003 & - & 0.001 \\
\hline
\bf (15, 10) & 0.003 & 0.001 & -\\
\hline
\end{tabular}
\label{tab:tunedrdt_T}
\end{table}
\end{center}
\vspace{-20pt}
As these distances are small in each case, there is no significant loss of quality when the number of points considered is reduced. The areas for both Sildenafil and Vardenafil are also close to 12.57, indicating high quality descriptors. The area for Tadalafil is overestimated slightly, however this is due to an issue with the replacement of the rings for motifs with a 5-membered ring between two other rings rather than the choice of $n_r$ and $n_\theta$. Similar results were observed for the consideration of $k=2$. As the quality is unaffected, the minimum parameters of $n_r=15$ and $n_\theta=10$ were used in the final descriptors to increase the speed of calculation. 

\subsection{Constructing a Similarity Score} \label{section:weightSA}

In order to facilitate familiar comparison of molecules, we wish to construct a similarity score rather than simply taking the distance between two matrices. In chemoinformatics, this score typically takes a value between 0 (no similarity) and 1 (identical) \cite{Kumar_Zhang_2018}. To achieve this we take the inverse distance, and account for size by taking the ratio of two surface areas. Equation \ref{eqn:simscore} gives the similarity score between two molecular surfaces $\mathcal{S}_{1}$ and $\mathcal{S}_{2}$, 

\begin{equation} \label{eqn:simscore}
   score(\mathcal{S}_{1},\mathcal{S}_{2}) = x (A_{min} / A_{max}) + y \frac{1}{1 + d(\mathbb{M}_{1},\mathbb{M}_{2})},
\end{equation}

where $A_{min}$ is the smaller of the two surface areas, and $A_{max}$ is the larger, in order to give a score bounded by 0 and 1. We therefore need to choose an appropriate set of weights $x$ and $y$ such that $x + y = 1$, and $x < 0.5$, to ensure the shape is the primary contributor to the score.



Table \ref{tab:weightSA} gives the resulting similarity scores for pairwise comparison of the PDE5 inhibitors. In all three cases, the similarity increases with increasing contribution from the surface area term as expected. The increase for Sildenafil-Vardenafil is only small, while for Tadalafil there is a greater effect of including the area. Final weights of $x=0.3$ and $y=0.7$ were selected to balance the contribution of the surface area without it dominating over the shape contribution. The PDE5 inhibitors were selected for tuning due to their known similarity, however further refinement of these parameters with a larger set of examples may be required for full scale virtual screening. \\
\\
\begin{center}
\begin{table}[h]
\centering
\caption{Similarity scores for the PDE5 inhibitors for surface area weights ranging from 0 to 0.5.}
\begin{tabular}{|c|c|c|c|c|} 
\hline 
\bf x & \bf y & \bf Sildenafil-Vardenafil & \bf Sildenafil-Tadalafil & \bf Vardenafil-Tadalafil \\
\hline
0 & 1 & 0.884 & 0.286 & 0.275 \\
0.1 & 0.9 & 0.892 & 0.340 & 0.328 \\
0.2 & 0.8 & 0.900 & 0.394 & 0.380 \\
0.3 & 0.7 & 0.908 & 0.449 & 0.432 \\
0.4 & 0.6 & 0.916 & 0.503 & 0.485\\
0.5 & 0.5 & 0.924 & 0.557 & 0.537 \\
\hline
\end{tabular}
\label{tab:weightSA}
\end{table}
\end{center}
\vspace{-40pt}
\subsection{Investigating Variation in 3D Conformers}

As discussed in the previous work \cite{cole2022riemannian}, consideration of the different orientations a molecule can adopt (known as conformers) is important when using 3D shape descriptors. Conformers of the same molecule should theoretically have scores in the range $0.7 < score < 1$, as high self-similarity is expected (scores above $0.7$ in chemoinformatics), while retaining the ability to distinguish between them. \\
\\
As with RGMolSA, two small sets of 10 conformers of the PDE5 inhibitors are used to investigate how KQMolSA regards different conformers. One set contains 10 random conformers, in which we would expect slightly more variance, while the other has 10 low energy conformers, for which higher similarity is expected. Both sets were produced using the ETKDG algorithm \cite{Riniker_Landrum_2015} with energy optimisation using the MMFF94 force field \cite{tosco_stiefl_landrum_2014}, both implemented in RDKit \cite{landrum}. The minimum, maximum and average shape similarity as well as the average RMSD (which compares conformers based on their atomic positions) for each set are given in Figure~\ref{fig:conf_sim}. The full set of RMSD and shape similarity comparisons are available in the {\bf Supporting Data}.

\begin{figure}
	\centering
	\subcaptionbox{$k=1$}{\includegraphics[width=\textwidth]{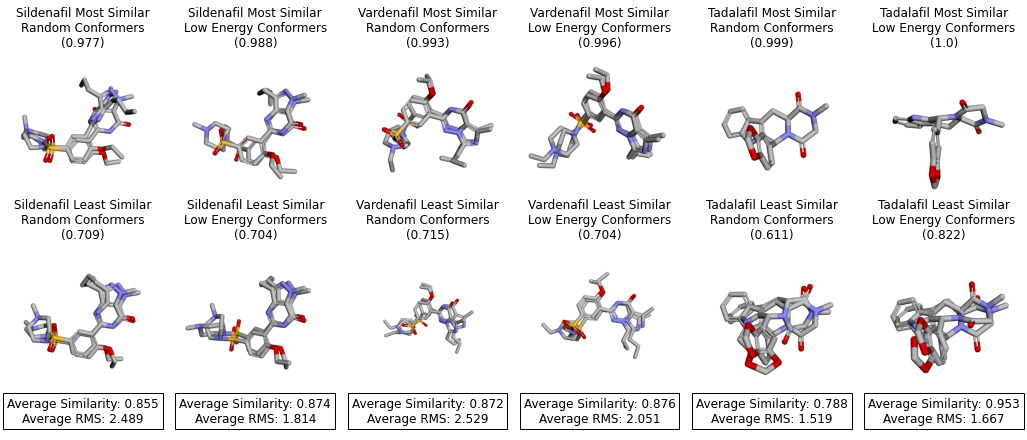}}%
	\\
	\subcaptionbox{$k=2$}{\includegraphics[width=\textwidth]{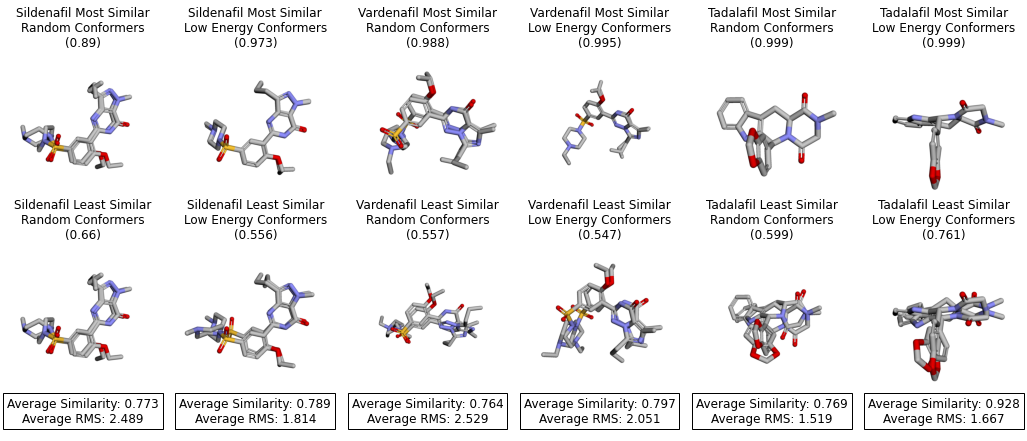}}%
	\caption{Overlay of the most and least shape-similar conformers of Sildenafil, Vardenafil and Tadalafil and the average shape similarity and RMSD for each set for (a) $k=1$ and (b) $k=2$. On average the conformers display a high degree of self-similarity despite the variance in atom-position similarity.} \label{fig:conf_sim}
\end{figure}




The RMSD and shape similarity for each set are compared in the swarm plots shown in Figure~\ref{fig:simvsrms}. For $k=1$, generally high similarity was observed, with some scores for the random conformers of Tadalafil falling slightly below 0.7. Greater variation is observed for $k=2$, where some conformer pairs have scores below 0.6. This reduction in similarity is expected for $k=2$ as the descriptors represent a more detailed approximation to the original surface than those for $k=1$ and hence will be more sensitive to differences in the geometry. However, the similarity scores obtained were on the whole lower than for RGMolSA, where the similarity between most conformer pairs is greater than 0.8 \cite{cole2022riemannian}. For the random sets, the similarity between conformers showed more variation than for RGMolSA, where clusters of similar conformers were observed. While KQMolSA does handle conformers well, RGMolSA appears to do a better job of this, due to the insensitivity to surface deformation of the spectrum of the Laplace–Beltrami operator. For virtual screening, this consideration of conformers as similar negates the need for a pre-alignment step prior to shape similarity calculation, and may allow molecules that can deform to fit in the binding pocket to be identified as potential hits, where these would otherwise be classified as the wrong shape by methods that depend on atomic coordinates. 




\begin{figure}[!h]
	\centering
	\subcaptionbox{RMS Similarity}{\includegraphics[width=7.5cm]{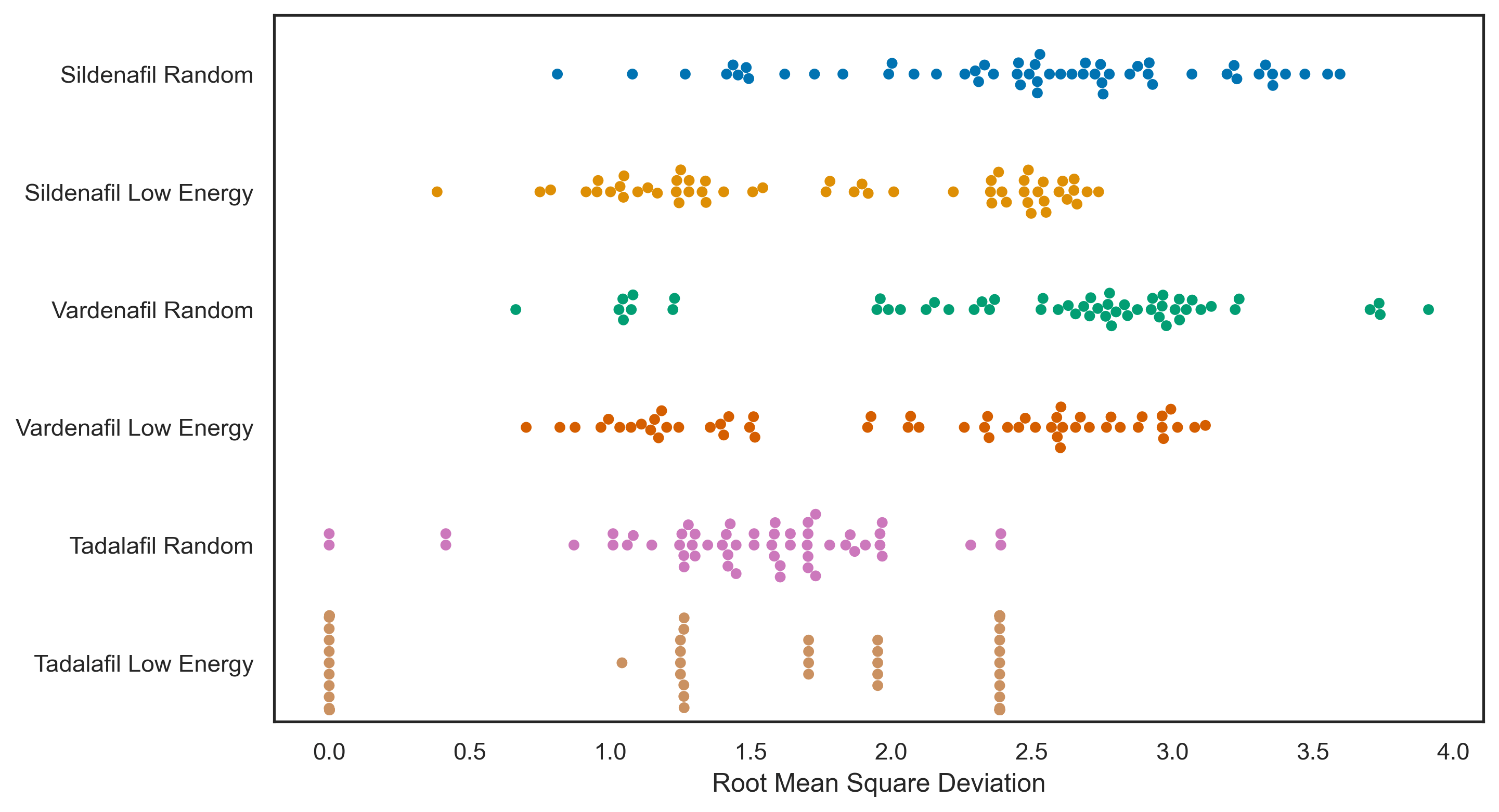}} %
	\hfill
	\subcaptionbox{Shape Similarity ($k=1$)}{\includegraphics[width=7.5cm]{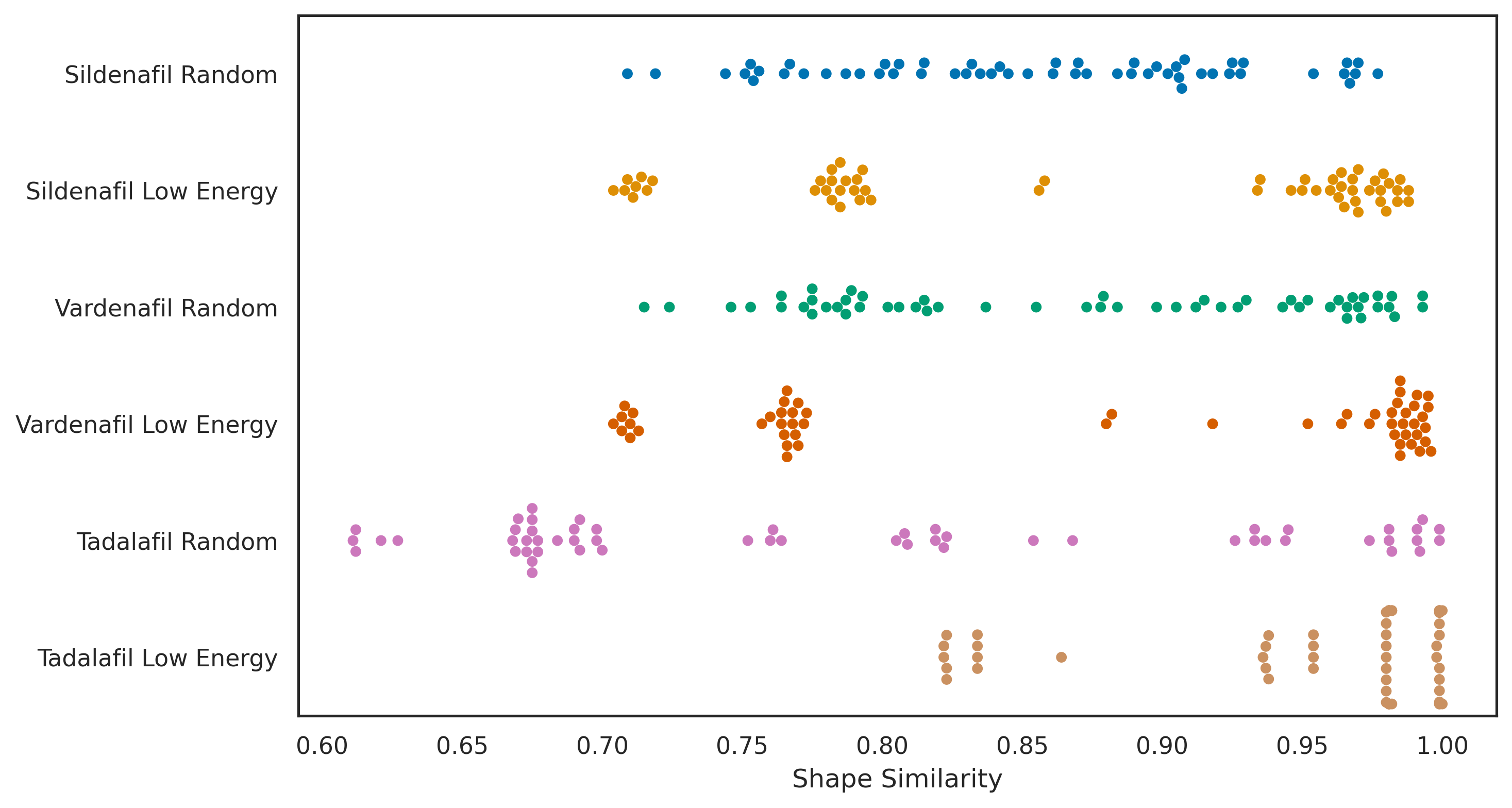}}%
	\\
	\subcaptionbox{Shape Similarity ($k=2$)}{\includegraphics[width=7.5cm]{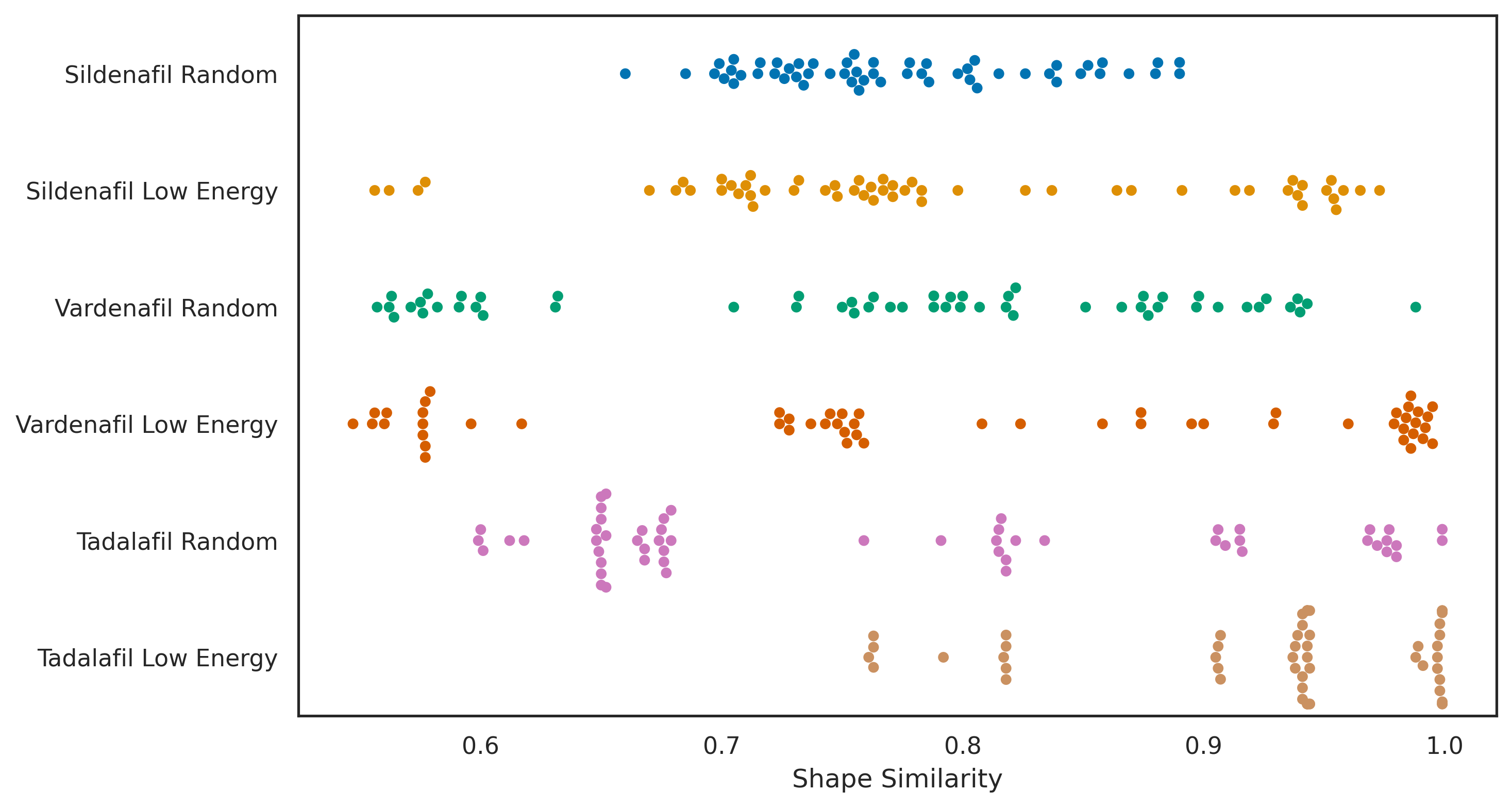}}%
	\caption{Swarm plots of the RMSD (in \AA) and shape similarity for our set of conformers highlight the general trend that different conformers are classed as having similar shape, despite significant variance in their atomic positions. Conformers with RMSD less than 1~\AA{} are considered similar, while those over 3~\AA{} have significant differences.} \label{fig:simvsrms}%
\end{figure}

\subsection{Comparison to Existing Methods}
The PDE5 inhibitor series was also used to investigate how well KQMolSA compares to the previous work, and to other open source shape similarity methods. Table~\ref{tab:SVTcomparison} provides the shape-similarity scores observed between the PDE5 inhibitors for KQMolSA (for $k=1$ and $k=2$), RGMolSA \cite{cole2022riemannian}, USRCAT \cite{Schreyer_Blundell_2012, landrum}, Shape-It \cite{Taminau_Thijs_De_Winter_2008} and MolSG \cite{SCPG}. A 2D representation, in the form of the 1024-bit Morgan fingerprint using radius 3, is also included. Each descriptor uses a similarity score between 0 (different) and 1 (identical). 
\vspace*{-20pt}
\begin{center}
\begin{table}[h]
\caption{Comparison of the work presented here (KQMolSA) to the previous work (RGMolSA)  \cite{cole2022riemannian} and existing atomic-distance \cite{Schreyer_Blundell_2012}, atomic-centred \cite{Taminau_Thijs_De_Winter_2008} and molecular surface based \cite{SCPG} descriptors. In all cases the similarity scores given are bound by 0 (no similarity) and 1 (identical).}
\vspace{10pt}
\begin{tabular}{|c|c|c|c|c|c|c|c|} 
\hline 
& \shortstack{\bf KQMolSA \\ (k=1)} & \shortstack{\bf KQMolSA \\ (k=2)} & \shortstack{\bf RGMolSA \\ {   } } & {\bf USRCAT} & {\bf Shape-It} & {\bf MolSG} & {\bf \shortstack{Morgan\\ Fingerprint}}\\
\hline
\shortstack{Sildenafil-\\Vardenafil} & 0.907 & 0.652 & 0.903 & 0.384 & 0.388 &  0.704 & 0.667 \\
\hline
\shortstack{Sildenafil-\\Tadalafil} & 0.449 & 0.482 & 0.809 & 0.269 & 0.278 &  0.746 & 0.201 \\
\hline
\shortstack{Vardenafil-\\Tadalafil} & 0.432 & 0.470 & 0.725 & 0.291 & 0.353 & 0.887  & 0.209 \\
\hline
\end{tabular}
\label{tab:SVTcomparison}
\end{table}
\end{center}
As discussed in the prequel to this paper, as Sildenafil and Vardenafil are close structural analogues they should display both high shape and fingerprint similarity. As Tadalafil is known to occupy a similar volume in PDE5 compared to the other inhibitors, we'd expect high shape similarity scores also, but lower 2D similarity. One conformer of each molecule is considered for simplicity. \\
\\
As for RGMolSA, Sildenafil and Vardenafil are scored as highly similar, with a score of 0.907 ($k=1$). However Tadalafil is not scored as highly, and for KQMolSA would be classed as dissimilar if the typical threshold of 0.7 was used. Lower similarity is observed for $k=2$, which is expected as discussed previously. The similarity score for $k=2$ has a small dependence on the order of comparison (A compared to B yields a score which may differ at the second decimal place from B compared to A, Table \ref{tab:k2_instability}). This is due to the distance calculation involving a numerical minimisation procedure rather than an exact expression, but this will have no practical implications in chemoinformatics applications. Both proposed methods (RGMolSA and KQMolSA) perform well in this simple study, with a higher predicted similarity for Sildenafil and Vardenafil than all the other 3D methods, and a more intuitive ordering of the relative similarity measures than MolSG. However, a full scale benchmarking study will be required to verify their performance.

\begin{center}
\begin{table}
\centering
\caption{Similarity scores for the PDE5 inhibitors for k=2 highlighting the dependence on the order of comparison.}
\begin{tabular}{|c|c|c|c|} 
\hline 
& \bf Sildenafil & \bf Vardenafil & \bf Tadalafil \\
\hline
Sildenafil & - & 0.652 & 0.462 \\
\hline
Vardenafil & 0.648 & - & 0.470 \\
\hline
Tadalafil & 0.482 & 0.470 & - \\
\hline
\end{tabular}
\label{tab:k2_instability}
\end{table}
\end{center}
\vspace{-20pt}
\subsection{Similarity to Potential Decoys}

As for RGMolSA, we also wanted to check how the method handles molecules that should be classed as genuinely different from the PDE5 inhibitor molecules. We therefore present a comparison to four other molecules (Figure~\ref{fig:others}): Arginine (supplement) which has a lower molecular weight, but similar general shape (a long chain of spheres); Lymecycline (antibiotic), with a higher molecular weight and a four-ring motif potentially giving part of the molecule a similar shape to Sildenafil; Diflorasone (topical corticosteroid), which has a similar molecular weight and four rings, but has a different therapeutic target/indication and S-octylglutathione (oligopeptide), which again has similar molecular weight, but no rings and the potential for similarity due to the branching in the centre of the molecule.

\begin{figure}[!ht]
\centering
\subcaptionbox*{Arginine}{\includegraphics[width=0.5\textwidth]{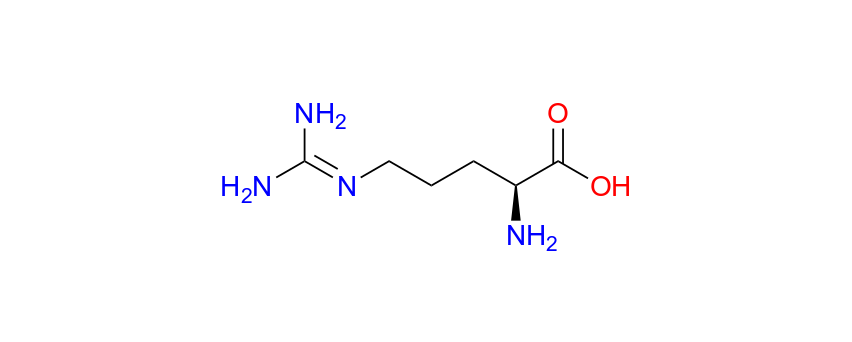}}%
\hfill
\subcaptionbox*{Lymecycline}{\includegraphics[width=0.5\textwidth]{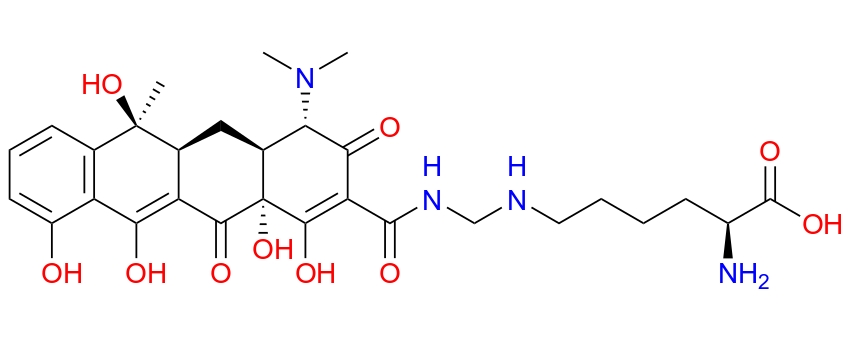}}%
\\
\subcaptionbox*{Diflorasone}{\includegraphics[width=0.5\textwidth]{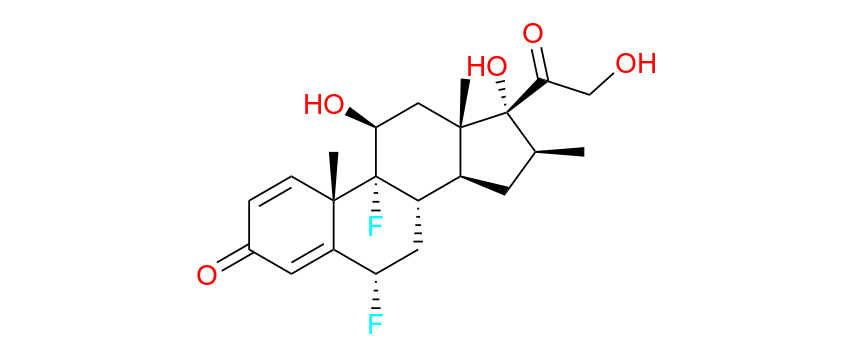}}%
\hfill
\subcaptionbox*{S-Octylglutathione}{\includegraphics[width=0.5\textwidth]{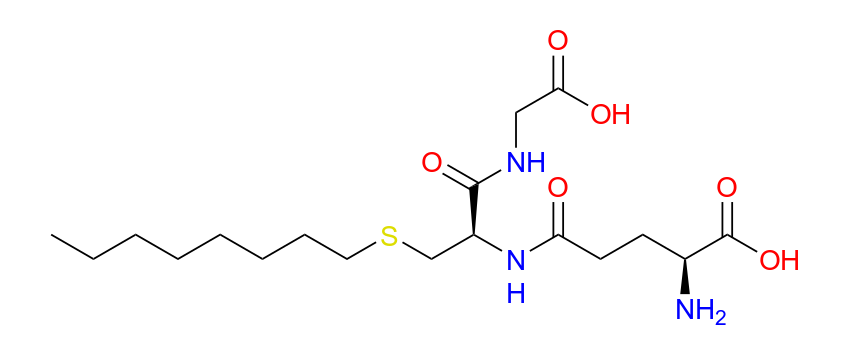}}%
\caption{Chemical structures of potential decoy molecules.} \label{fig:others}
\end{figure}

The results of this comparison are presented in Figure~\ref{fig:othersoverlay}. Most of the scores obtained for both $k=1$ and $k=2$ fall significantly below the typical threshold of 0.7 for similarity, and as such these molecules would be classed as genuinely different and likely inactive against PDE5. The exception is the comparison between Tadalafil and Diflorasone, where a higher score of 0.74 ($k=1$) is obtained. Due to the similarity between their structures (both contain a motif of 4 fused rings), we would expect to see some similarity between the two. Inspection by eye of both the space filling model and surface of the two molecules also suggests they do have genuinely similar shapes (Figure \ref{fig:tfil_dif}). These were also classed as potentially similar by RGMolSA (similarity of 0.872). 

\begin{figure}
\centering
\includegraphics[width=6.5in]{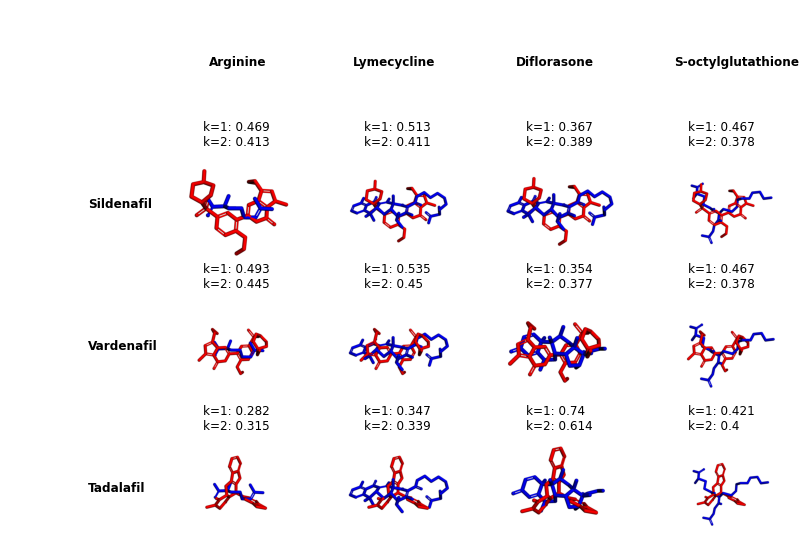}
\caption{KQMolSA similarity (for $k=1$ and $k=2$) of four `different' molecules (blue) to the PDE5 inhibitor test series (red). The overlay of the structures was computed using Open3DAlign \cite{tosco_balle_shiri_2011}}\label{fig:othersoverlay}
\end{figure}

\begin{figure}[!ht]
\centering
\subcaptionbox{Tadalafil - Space Filling Model}{\includegraphics[width=0.50\textwidth]{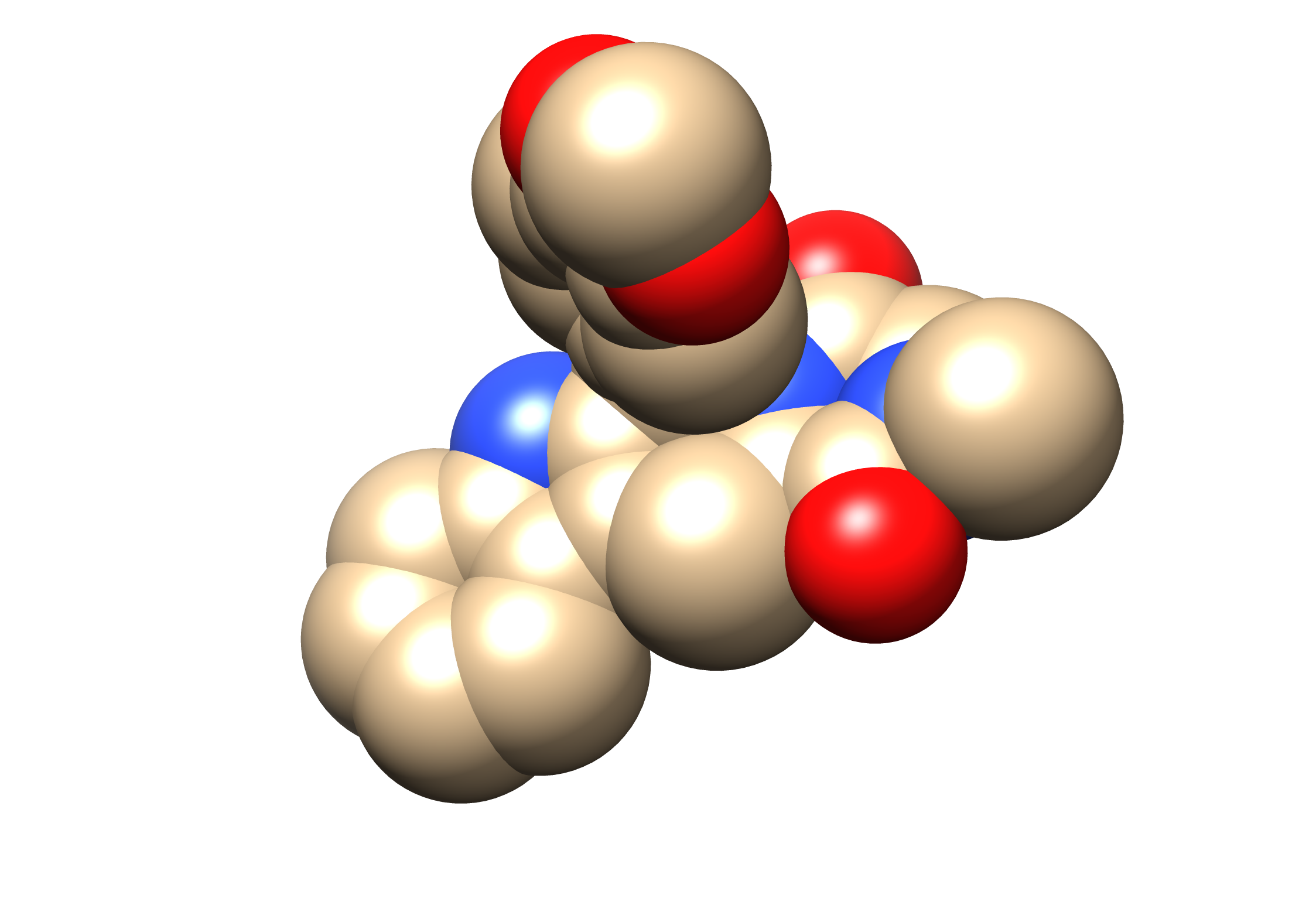}}%
\hfill
\subcaptionbox{Diflorasone - Space Filling Model}{\includegraphics[width=0.50\textwidth]{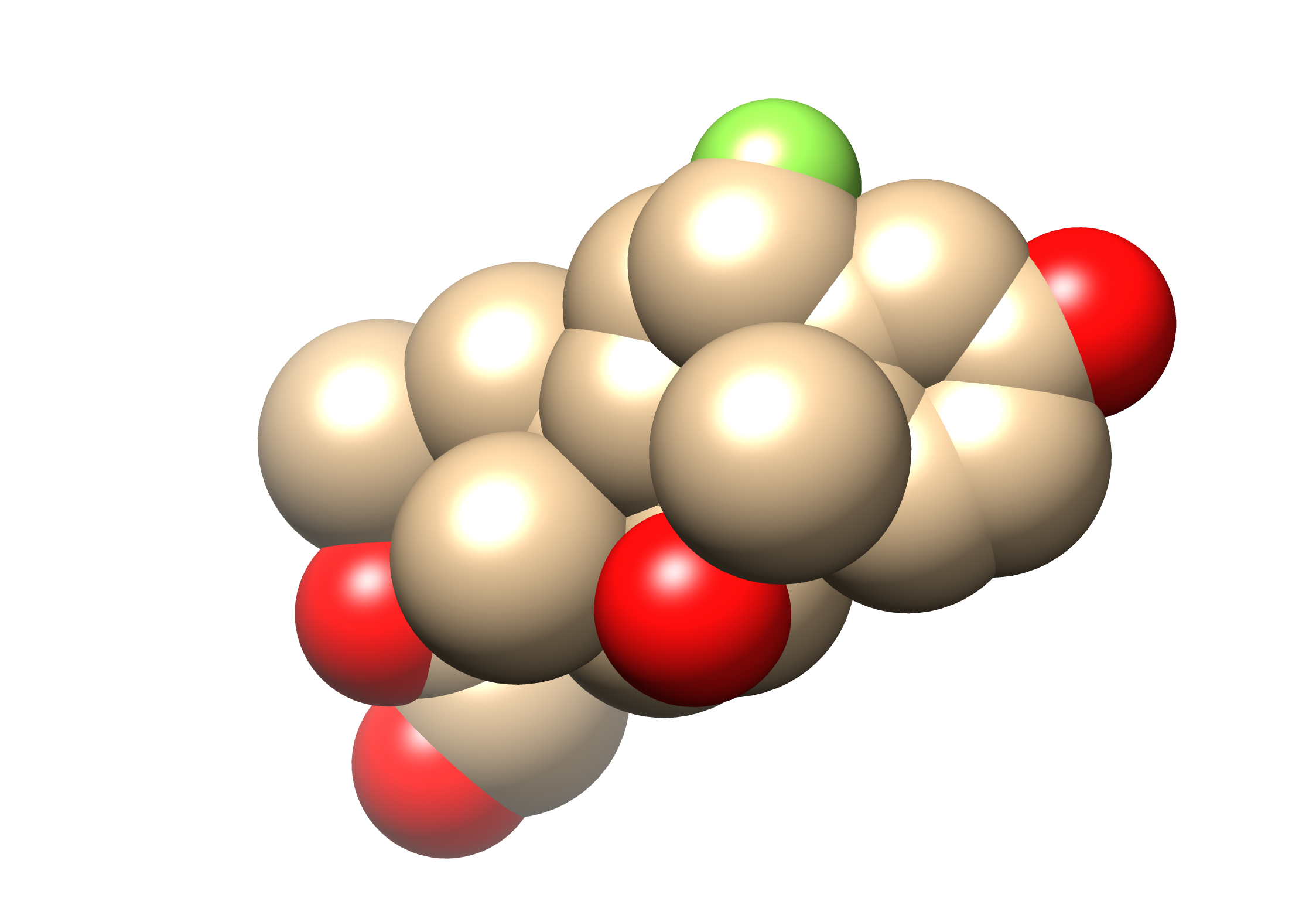}}%
\\
\subcaptionbox{Tadalafil - Surface}{\includegraphics[width=0.50\textwidth]{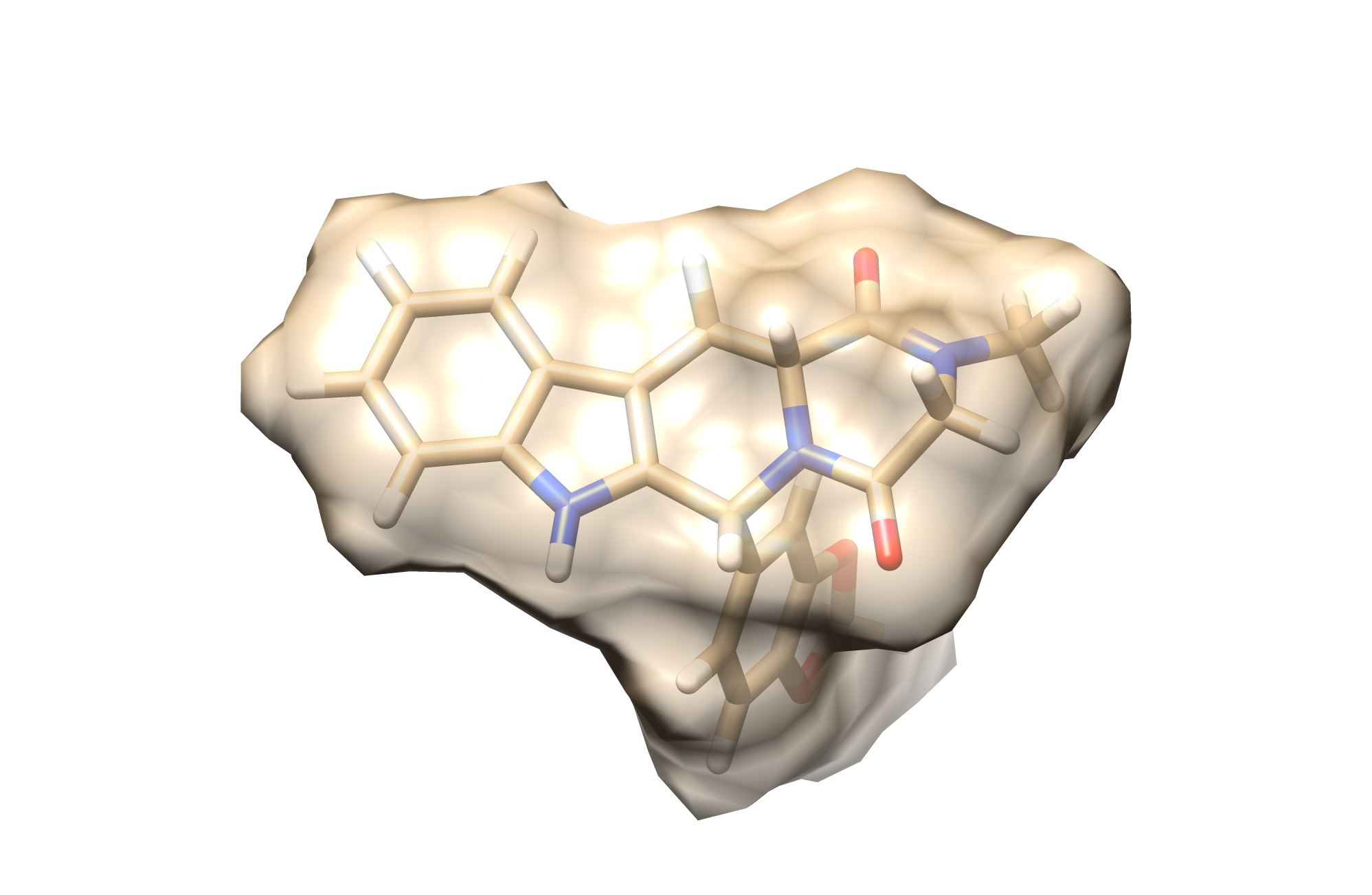}}%
\hfill
\subcaptionbox{Diflorasone - Surface}{\includegraphics[width=0.50\textwidth]{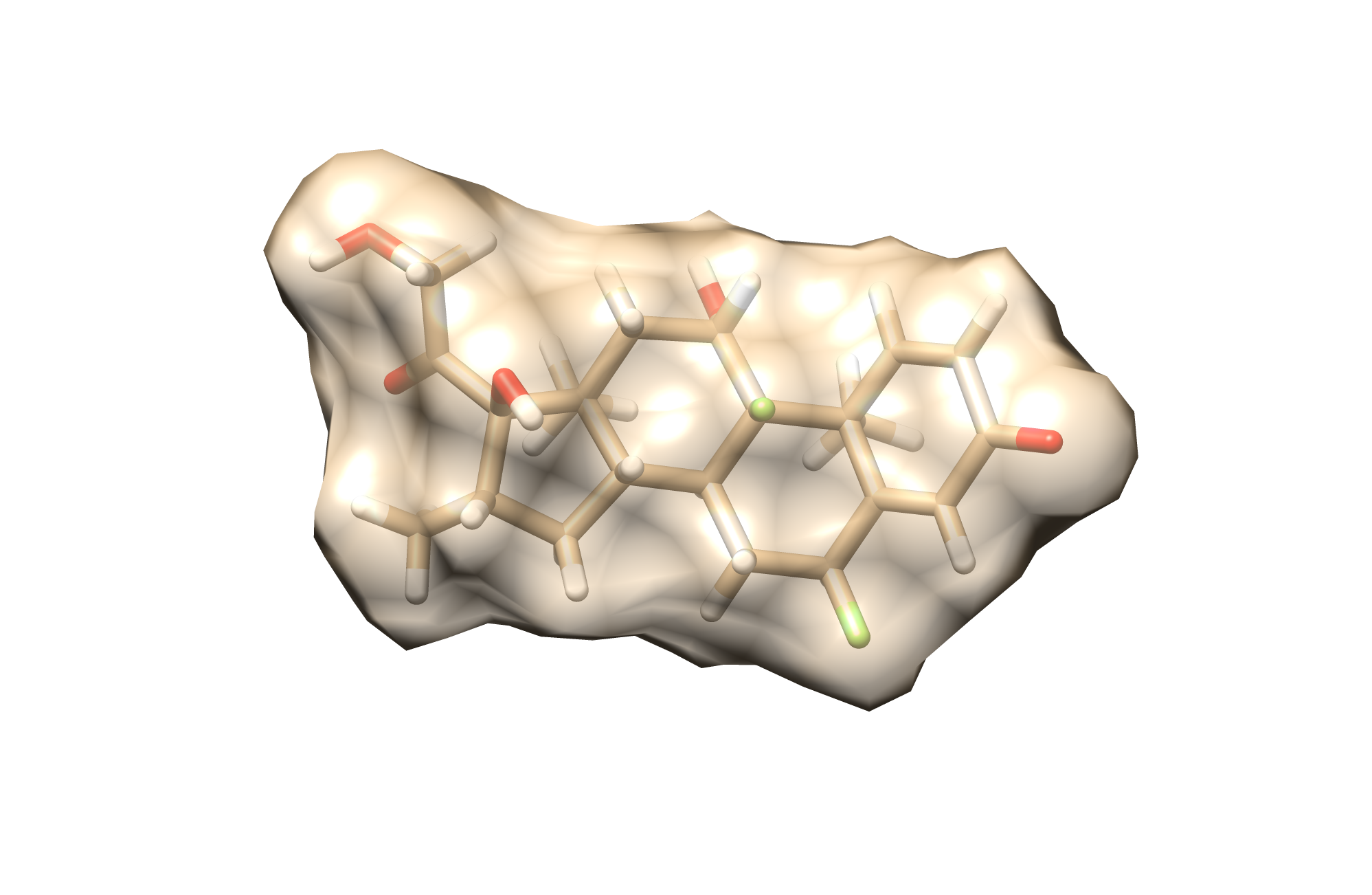}}%
\caption{Comparison by eye of both the space filling model and the surface of Tadalafil and Diflorasone highlights their similarity.} \label{fig:tfil_dif}
\end{figure}

\section{Conclusion}\label{section:conclusion}

We have outlined the theory underpinning an entirely novel shape descriptor,

\begin{equation}
\mathbb{M}_{ij} =  \iint_{\mathbb{C}}z^{i}\overline{z}^{j} e^{-k\varphi}F(z)\sqrt{-1}dz \wedge d\overline{z},   
\end{equation}

the $(2k + 1) \times (2k + 1)$ Hermitian matrix which captures the geometry of the molecular surface. The distance between two such matrix representations is then given as 

\begin{equation}
d(\mathbb{M}_{1},\mathbb{M}_{2})=k^{-\frac{3}{2}}\sqrt{\sum_{i=1}^{2k+1}(\lambda_{i}-\mu_{i})^{2}}.    
\end{equation}

An overall similarity score of 1 for identical molecules and 0 for no similarity is then obtained as 

\begin{equation} \label{eqn:simscore2}
   score(\mathcal{S}_{1},\mathcal{S}_{2}) = 0.3 (A_{min} / A_{max}) + 0.7 \frac{1}{1 + d(\mathbb{M}_{1},\mathbb{M}_{2})}.
\end{equation}

As with the previously reported work, the capabilities of KQMolSA were investigated using a series of PDE5 inhibitors known to have similar shape. The method generally handles conformers well, with similarity scores generally higher than $0.7$. The scores obtained were higher for $k=1$ than $k=2$, which is expected due to the greater detail leading to more sensitivity to changes in geometry. The insensitivity to deformation of the surface lead to RGMolSA outperforming KQMolSA in this area. KQMolSA performs relatively well compared to existing methods, identifying Sildenafil and Vardenafil as highly similar, but assigning lower similarity scores to Tadalafil. This small study suggests that RGMolSA might still perform better, but a full retrospective benchmarking study is required to confirm this. Compared to RGMolSA, KQMolSA does have the advantage of a lower dependence on the choice of base sphere. There may therefore be some instances where the use of KQMolSA is more appropriate despite its seemingly poorer performance, for example in the consideration of long chain molecules with few rings, where numerical errors are often observed for RGMolSA. Comparison to a set of potential decoy molecules yielded low scores for all except comparison of Tadalafil to Diflorasone, which were also classed as similar by RGMolSA. Inspection by eye of both the space filling and surface models of the molecules suggests that this assignment is reasonable, as they look similar in shape. Identification of such similarity evidences the potential for scaffold hopping by these methods.

Whilst the above tests suggest that the matrix $\mathbb{M}$ does give a promising description of molecular shape, the method does have some drawbacks, primarily in the calculation of the distance between two descriptors. While the notion of the distance between two Hermitian inner products (represented by the matrices $\mathbb{M}_{1}$ and $\mathbb{M}_{2}$) is well understood, the calculation of the distance between molecular surfaces requires the distance between a point on an $SL(2,\mathbb{C})$-orbit to be minimised. Despite the use of existing optimised minimisation algorithms, this process is still quite slow, depending on the extent of the required minimisation, and further does not guarantee that the global minimum has been found. This step typically takes a few seconds per pair, compared to a near-instantaneous calculation for RGMolSA. Further refinement of this step would be required for use of the method in screening ultra-large chemical libraries as part of a drug discovery pipeline.\\ 
\\
Of course, there are many other ways of measuring the distance between two Hermitian matrices.  One might hope that some form of machine learning, trained on an appropriate data set, might discern other useful geometries on the space of descriptors.

The method also contains numerical instability above $k=2$ (and for $k=2$ in a few instances), producing Hermitian matrices that are not positive definite. As Hermitian matrices differing only by a scale factor can be considered equivalent, we have handled such cases by scaling one matrix by a factor of 10-1000 to bring the eigenvalues into the range of Python's numerical tolerance.

Along with addressing these issues, both of the methods proposed could be further improved through the consideration of pharmacorphoric features, such as aromatic rings, hydrogen bond donors and acceptors, alongside the shape. As these features are important for binding, this may lead to improved predictions compared to the consideration of shape alone. As for RGMolSA, there would also be scope to investigate the use the Hermitian matrix descriptors produced by KQMolSA as a feature descriptor in machine learning.

\section{Appendix: Finding the K\"ahler potential}\label{sec:appendix}

Before giving the proof of the form of the K\"ahler potential, we dispense with a small technical point.  From the point of view of describing the K\"ahler form $\omega$ via
\[
\sqrt{-1}\partial \bar{\partial}\varphi = \omega,
\]
the K\"ahler potential $\varphi$ is only locally defined and adding any function $H$
satisfying $\sqrt{-1}\partial \bar{\partial}H = 0$

will also define a K\"ahler potential for $\omega$.  In our setting where the underlying complex manifold is $\mathbb{CP}^{1}$ and we are using the standard coordinate $z$, we can add any harmonic function $H:\mathbb{C} \rightarrow \mathbb{R}$ to obtain a valid K\"ahler potential.\\
\\
However, in K\"ahler Quantization, the potential $\varphi$ actually describes a global object, the Hermitian metric $h$ on the line bundle $L$. This means that the functions
\[
h(z^{j},z^{j}) = e^{-k\varphi(z)}|z|^{2j},
\]
are defined over whole sphere $\mathbb{CP}^{1}$. In particular, they extend to functions over the point at infinity.  For example the round metric has K\"ahler potential ${\varphi = -2\log(|z|^{2}+1)}$ and so, if we add a harmonic function $H$ we require
\[
\frac{|z|^{4k}}{(1+|z|^{2})^{2k}}e^{-kH}
\]
to be bounded.  The Liouville Theorem then implies $H$ must be constant.

\begin{theorem}[Form of K\"ahler potential]
Let $\omega$ be a K\"ahler metric of the form given by Equation~(\ref{eqn:form_of_KF}). If we denote the region corresponding to the $i^{th}$ sphere as $R_{i}\subset\mathbb{C}$, then the K\"ahler potential potential $\varphi$, which satisfies $\sqrt{-1}\partial\overline{\partial}\varphi=\omega$, is of the form
\[
\varphi(z) = \frac{C_{i}}{B_{i}}\log(|z-A_{i}|^{2}+B_{i})+\sum_{j=1}^{N}\mathcal{K}_{ij}\log(|\alpha_{ij}z+\beta_{ij}|^{2}),
\]
where $\mathcal{K} \in M^{N\times N}(\mathbb{R})$, and $\alpha, \beta \in M^{N\times N}(\mathbb{C})$.

\end{theorem}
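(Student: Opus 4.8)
The plan is to solve the Poisson equation $\partial\bar\partial\varphi = \omega$ piecewise — one formula per region $R_i$ — and then impose the $C^2$ matching conditions across the circular interfaces between regions, showing that these conditions can be satisfied by correction terms of the stated form. First I would record the local solution: in the region $R_i$, where the metric function is $F(z) = C_i/(|z-A_i|^2+B_i)^2$ (or the analogous expression $2r_B^2/(1+|z|^2)^2$ on the base region $\mathcal{C}$, which is the same shape with $A=0$, $B=1$, $C=2r_B^2$), the identity quoted in the text, $\tfrac{\partial^2}{\partial z\partial\bar z}\bigl(\tfrac{C}{B}\log(|z-A|^2+B)\bigr) = \tfrac{C}{(|z-A|^2+B)^2}$, shows that $\tfrac{C_i}{B_i}\log(|z-A_i|^2+B_i)$ is a particular solution on $R_i$. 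Any other solution differs from it by a harmonic function on $R_i$, and since $R_i$ is (conformally) an annulus or disc, the harmonic correction has a known series form; the claim is that only finitely many terms, each of the shape $\mathcal{K}_{ij}\log|\alpha_{ij}z+\beta_{ij}|^2$, are needed, one per sphere $j$.

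The second step is to pin down those correction terms by the interface conditions. Each pair of intersecting spheres $i,j$ meets along a circle in $\mathbb{C}$, i.e. the boundary $\partial\mathbb{D}(a_j,R_j)$ between region $R_i$ and region $R_j$. Along such a circle the candidate potential $\varphi$ must be $C^2$ across the interface (this is exactly the ``correction term'' requirement flagged in the text, needed so that $\varphi \in C^2(\mathbb{C})$ even though $F$ itself is only piecewise defined). Writing out the jump in $\varphi$ and in its first normal derivative across $\partial\mathbb{D}(a_j,R_j)$ gives, after using the explicit local formulas, a discrepancy that is a harmonic function with a logarithmic singularity located at the inversion of $a_j$ through the circle — precisely a function of the form $\log|\alpha z+\beta|^2$ with $-\beta/\alpha$ outside $\mathbb{D}(a_j,R_j)$, matching the remark in the text that the apparent singularity lies outside the disc where that formula is used. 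Because $\log|\alpha z+\beta|^2$ is harmonic away from its pole, adding it does not disturb the Poisson equation; and the family $\{\log|\alpha_{ij}z+\beta_{ij}|^2\}_j$ is rich enough (one member per neighbouring sphere) to cancel every interface discrepancy simultaneously. Collecting the coefficients into $\mathcal{K}\in M^{N\times N}(\mathbb{R})$ and the pole data into $\alpha,\beta\in M^{N\times N}(\mathbb{C})$ yields exactly the asserted formula.

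The third step is to upgrade this from a collection of local patches to a genuine global K\"ahler potential on $\mathbb{CP}^1$: one must check that the piecewise formula actually glues to a single $C^2$ function on all of $\mathbb{C}$ and that it has the correct growth at $z=\infty$ so that $h = e^{-k\varphi}$ extends over the point at infinity (the Liouville-type argument sketched at the start of the appendix shows the ambiguity is only an additive constant, which is harmless). This is a finite induction over the levels of the tree of spheres: process spheres level by level, and at each stage the correction terms introduced for level $l+1$ discs are supported (as singularities) outside the level-$l$ disc being matched, so earlier matchings are not spoiled.

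The main obstacle I expect is the bookkeeping in the second step — verifying that a single log term per neighbouring sphere is simultaneously enough to achieve $C^2$-matching across \emph{all} interfaces without the corrections interfering with one another, and identifying the correct $\alpha_{ij},\beta_{ij}$ (essentially the circle-inversion data) in closed form. The tree structure of the sphere adjacency graph is what makes this tractable: because the regions are nested discs with no cycles (genus zero, macrocycles excluded), the matching decouples hierarchically rather than requiring the simultaneous solution of a coupled linear system over all $N$ spheres at once. Everything else — the local particular solution, the harmonicity of the corrections, the Liouville argument at infinity — is routine given the explicit forms already in hand.
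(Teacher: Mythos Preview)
Your approach is sound in outline but follows a genuinely different route from the paper. The paper also argues by induction on the number of spheres, but rather than imposing $C^{2}$ matching conditions at the circular interfaces and solving for the harmonic corrections, it computes the potential directly via the Dolbeault method: after mapping the new region $R_{k}$ to the unit disc by a M\"obius transformation $\mathcal{M}$, it writes the solution of $\bar\partial\psi = F\,d\bar w$ as a Cauchy--Pompeiu integral, splits the domain into $\mathbb{D}$ and $\mathbb{C}\setminus\mathbb{D}$, and evaluates the boundary contributions explicitly with the residue theorem. The upshot is that the potential on $|w|>1$ differs from the inductively known one by a constant multiple of $\log|w|^{2}$, which pulls back under $\mathcal{M}$ to a term of the form $c\log|\alpha z+\beta|^{2}$. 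Your interface-matching argument is more elementary in that it avoids the Cauchy--Pompeiu and residue machinery, and it makes the role of the tree structure of the sphere adjacency graph more explicit; the paper's integral computation, by contrast, produces the correction coefficients constructively and sidesteps any question of whether the interface equations are solvable. The point you correctly flag as the main obstacle---that a \emph{single} log term per added sphere suffices to achieve $C^{2}$ gluing---is exactly what the residue calculation in the paper delivers automatically, whereas in your scheme it remains to be verified by hand that the jump in the local particular solutions across $\partial\mathbb{D}(a_{j},R_{j})$ really has the form $c\log|\alpha z+\beta|^{2}$ rather than a more general harmonic function on the annulus.
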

\begin{proof}
The proof is by induction on the number of spheres $N$. For $N=1$ the metric $\omega$ is the round metric and we can take $\mathcal{K}=0$. Adding a new sphere to the surface changes the metric by adding a new region $R_{k}$ which is a disc where the metric takes the form 
\[
\omega(z)|_{R_{k}} = \frac{C_{k}}{(|z-A_{k}|^{2}+B_{k})^{2}}\sqrt{-1}dz \wedge d\overline{z}.
\]

We can map $R_{k}$ to the unit disc about the origin by a M\"obius transformation $\mathcal{M}$ in  such a way that, in the coordinate of the unit disc, the metric is given by

\[
\widetilde{\omega}(w) = \left\{ \begin{array}{ccc} F(w) \sqrt{-1}dw \wedge d\overline{w} & \mathrm{if} & |w| >1, \\
& & \\
\dfrac{\kappa}{(|w|^{2}+\varepsilon)^{2}} \sqrt{-1}dw \wedge d\overline{w} & \mathrm{if} & |w| \leq 1, 
\end{array} \right.
\]
for some function $F:\mathbb{C} \rightarrow \mathbb{R}$ and constants $\kappa, \varepsilon \in \mathbb{R}$.\\
\\
We solve the $\bar{\partial}$-equation using the Dolbeault method; for a compactly supported\footnote{Our function is not compactly supported but we could cut off at an arbitrary radius to produce such a function.} continuous function $H:\mathbb{C}\rightarrow\mathbb{C}$, 
\[
\psi(w) = \dfrac{1}{2\pi\sqrt{-1}}\iint_{\mathbb{C}} \dfrac{H(p)}{p-w}dp\wedge d\overline{p},
\]
solves $\overline{\partial}\psi = H(w) d\overline{w}$. We split the integral according to the form of the metric and consider
\[
\psi(w) = \dfrac{1}{2\pi\sqrt{-1}}\iint_{\mathbb{D}}\dfrac{\kappa}{(|p|^{2}+\varepsilon)^{2}(p-w)}dp\wedge d\overline{p} +\dfrac{1}{2\pi\sqrt{-1}}\iint_{\mathbb{C}\backslash \mathbb{D}}\dfrac{F(p)}{p-w}dp\wedge d\overline{p}.
\]
To compute the first integral we use the Cauchy--Pompeiu integral formula  
and the fact that
\[
\dfrac{\kappa}{(|p|^{2}+\varepsilon)^{2}} = \dfrac{\partial}{\partial \overline{p}}\left( \dfrac{(\kappa/\varepsilon)\overline{p}}{(|p|^2+\varepsilon)}  \right),
\]
to give
\[
\dfrac{1}{2\pi\sqrt{-1}}\iint_{\mathbb{D}}\dfrac{\kappa}{(|p|^{2}+\varepsilon)^{2}(p-w)}dp\wedge d\overline{p} = 
\]
\[
\left\{\begin{array}{ccc}
\left( \dfrac{(\kappa/\varepsilon)\overline{w}}{(|w|^2+\varepsilon)}  \right)-\dfrac{1}{2\pi\sqrt{-1}}\displaystyle\int_{\partial \mathbb{D}}\dfrac{(\kappa/\varepsilon)\overline{p}}{(|p|^2+\varepsilon)(p-w)}dp & \mathrm{if} & |w| < 1,\\
& &\\
-\dfrac{1}{2\pi\sqrt{-1}}\displaystyle\int_{\partial \mathbb{D}}\dfrac{(\kappa/\varepsilon)\overline{p}}{(|p|^2+\varepsilon)(p-w)}dp & \mathrm{if} & |w| > 1.
\end{array} \right.
\]
The contour integral 
\[
\dfrac{1}{2\pi\sqrt{-1}}\int_{\partial \mathbb{D}}\frac{(\kappa/\varepsilon)\overline{p}}{(|p|^2+B)(p-w)}dp,
\]
can be easily computed using the Cauchy Residue Formula and this yields
\[
\dfrac{1}{2\pi\sqrt{-1}}\int_{\partial \mathbb{D}}\frac{(\kappa/\varepsilon)\overline{p}}{(|p|^2+\varepsilon)(p-w)}dp = \left\{ \begin{array}{cc} 0 & \mathrm{if} \ |w|<1,\\
-\frac{(\kappa/\varepsilon)}{(1+\varepsilon)w} & \mathrm{if} \ |w|>1.
\end{array} \right.
\]
Finally, we arrive at
\[
\dfrac{1}{2\pi\sqrt{-1}}\iint_{\mathbb{D}}\dfrac{\kappa}{(|p|^{2}+\varepsilon)^{2}(p-w)}dp\wedge d\overline{p}= \left\{ \begin{array}{cc} \left( \frac{(\kappa/\varepsilon)\overline{w}}{|w|^2+\varepsilon}  \right) & \mathrm{if} \ |w|<1,\\
\frac{(\kappa/\varepsilon)}{(1+\varepsilon)w} & \mathrm{if} \ |w|>1.
\end{array} \right.
\]
To compute the second integral, we again split the domain and consider
\[
\dfrac{1}{2\pi\sqrt{-1}}\iint_{\mathbb{C}\backslash \mathbb{D}}\dfrac{F(p)}{p-w}dp\wedge d\overline{p} = \dfrac{1}{2\pi\sqrt{-1}}\iint_{\mathbb{C}}\dfrac{F(p)}{p-w}dp\wedge d\overline{p} - \dfrac{1}{2\pi\sqrt{-1}}\iint_{ \mathbb{D}}\dfrac{F(p)}{p-w}dp\wedge d\overline{p}. 
\]
The integral 
\[
S(w) = \dfrac{1}{2\pi\sqrt{-1}}\iint_{\mathbb{C}}\dfrac{F(p)}{p-w}dp\wedge d\overline{p},
\]
is a solution to
\[
\dfrac{\partial S}{\partial \overline{w}} = F(w).
\]
In the unit disc $\mathbb{D}$, $F$ has the form
\[
F(w) = \dfrac{\tilde{\kappa}}{(|w|^{2}+\tilde{\varepsilon})^{2}},
\]
where $\tilde{\kappa}$ and $\tilde{\varepsilon}$ are positive constants. Hence 
\[
\psi(w) = \left\{\begin{array}{ccc}
S(w)+\left(\dfrac{(\kappa/\varepsilon)}{|w|^{2}+\varepsilon}-\dfrac{(\tilde{\kappa}/\tilde{\varepsilon})}{|w|^{2}+\tilde{\varepsilon}} \right)\overline{w} & \mathrm{if} & |w|<1,\\
& & \\
S(w)+\left(\dfrac{(\kappa/\varepsilon)}{1+\varepsilon}-\dfrac{(\tilde{\kappa}/\tilde{\varepsilon})}{1+\tilde{\varepsilon}}\right)w^{-1} & \mathrm{if} & |w|>1,
\end{array}\right.
\]
solves $dw \wedge \overline{\partial}\psi = \widetilde{\omega}(w)$. 

If $Q(w)$ is a K\"ahler potential for $F(w)\sqrt{-1}dw \wedge d\overline{w}$ then 
\[
\widetilde{\varphi}(w) = \left\{ \begin{array}{ccc}
Q(w)+(\kappa/\varepsilon)\log(|w|^{2}+\varepsilon)-(\tilde{\kappa}/\tilde{\varepsilon})\log(|w|^{2}+\tilde{\varepsilon})-K &\mathrm{if} & |w|<1,\\
Q(w)+\left(\dfrac{(\kappa/\varepsilon)}{1+\varepsilon}-\dfrac{(\tilde{\kappa}/\tilde{\varepsilon})}{1+\tilde{\varepsilon}}\right)\log(|w|^{2}) &\mathrm{if} & |w|>1,
\end{array}\right.
\]
where 
\[
K = (\kappa/\varepsilon)\log(1+\varepsilon)-(\tilde{\kappa}/\tilde{\varepsilon})\log(1+\tilde{\varepsilon}),
\]
is a K\"ahler potential for $\widetilde{\omega}$. Pulling back the function $\widetilde{\varphi}$ via the M\"obius transformation
\[
\mathcal{M}(z) = \dfrac{\alpha z+\beta}{\gamma z+\delta}
\]
we see
\[
\varphi_{k}(z) = \left\{\begin{array}{ccc}
Q\left(\dfrac{\alpha z+\beta}{\gamma z+\delta} \right)+(\kappa/\varepsilon)\log\left(\left|\dfrac{\alpha z+\beta}{\gamma z+\delta}\right|^{2}+\varepsilon\right)-K
& \mathrm{if} & z\in R_{k}\\
Q\left(\dfrac{\alpha z+\beta}{\gamma z+\delta} \right)+\left(\dfrac{(\kappa/\varepsilon)}{1+\varepsilon}-\dfrac{(\tilde{\kappa}/\tilde{\varepsilon})}{1+\tilde{\varepsilon}}\right)\log\left(\left|\dfrac{\alpha z+\beta}{\gamma z+\delta}\right|^{2}\right) & \mathrm{if} & z \not \in R_{k}
\end{array}\right.
\]
is a K\"ahler potential for the metric which is singular at at the point $z=-\delta/\gamma$. We can replace the $Q$-term by the appropriate function for the previous $\varphi$ and then add the appropriate multiple of $\log(|\gamma z+\delta|^{2})$ to produce a K\"ahler potential of the appropriate form.

\end{proof}
\section{Acknowledgements}
The authors acknowledge support from an EPSRC Doctoral Training Partnership studentship (grant EP/R51309X/1), the Alan Turing Institute Enrichment Scheme (R.P.), and a UKRI Future Leaders Fellowship (grant MR/T019654/1) (D.J.C.). S.J.H. would like to thank Dr R. L. Hall for his interest and for useful conversations about the project. T.M. would like to thank University of California, Irvine for their hospitality whilst some of the work on this paper was completed.

\bibliographystyle{unsrt} 
\bibliography{references}
\end{document}